\numberwithin{equation}{section}
\newtheorem{theorem}{Theorem}[section]
\newtheorem{corollary}[theorem]{Corollary}
\newtheorem{definition}[theorem]{Definition}
\newtheorem{lemma}[theorem]{Lemma}
\newtheorem{remark}[theorem]{Remark}
\newcommand\hide[1]{}
\newcommand{\TmodXvw}[3]{#1 \backslash\mkern-6mu\backslash (#2)^{ss}_{#1}(#3)}
\begin{document}
	
	\author{
		Somnath Dake \thanks{Chennai Mathematical Institute, Chennai, India \tt{somnath@cmi.ac.in}}
		\and
		Shripad M. Garge \thanks{Indian Institute of Technology Bombay, Powai, Mumbai, India \tt{shripad@math.iitb.ac.in}}
		\and 
		Arpita Nayek \thanks{SRM University, Amaravati, Andhra Pradesh \tt{arpita.n@srmap.edu.in}}
	}
	
	\title{Torus Quotients of Richardson Varieties} 
	\date{}
	
	\maketitle
	
	\begin{abstract}
	 \noindent For $1\le r\le n-1,$ let $G_{r,n}$ denote the Grassmannian parametrizing  $r$-dimensional subspaces of $\mathbb{C}^{n}.$ Let $(r,n)=1.$ In this article we show that the GIT quotients of certain  Richardson varieties in $G_{r,n}$ for the action of a maximal torus in $SL(n,\mathbb{C})$ are the product of projective spaces  with respect to the descent of a suitable line bundle. 
	\end{abstract}

\section{Introduction}\label{section1}
	Let \(T\) be the maximal torus of $SL(n,\mathbb{C})$ consisting of the diagonal matrices in \(SL(n,\mathbb{C})\). There is a natural projective variety structure on $G_{r,n}$ given by the Pl\"ucker embedding $\pi: G_{r,n}\hookrightarrow \mathbb{P}(\wedge^{r}\mathbb{C}^n)$. The restriction of the natural action of $SL(n,\mathbb{ C})$ on $\mathbb{C}^n$ to $T$ induces an action of $T$ on $\wedge^{r}\mathbb{C}^n$ and thus on $\mathbb{P}(\wedge^{r}\mathbb{C}^n),$ moreover, $\pi$ is $T$-equivariant. 
	
	
	For the action of  $T$ on $G_{r,n},$ the GIT quotients have been studied extensively. In \cite{Hausmann1996}, Hausmann and Knutson identified the GIT quotient of the Grassmannian  $G_{2,n}$ by $T$  with the moduli space of polygons in $\mathbb{R}^{3}.$ Also, they showed that GIT quotient of $G_{2,n}$ by $T$ can be realized as the GIT quotient of an $n$-fold product of projective lines by the diagonal action of $PSL(2, \mathbb{C}).$ 
	
	In \cite{Skorobogatov}, Skorobogatov gave a combinatorial description using Hilbert-Mumford criterion, when a point in $G_{r,n}$ is semistable with respect to a suitable $T$-linearized line bundle.  As a corollary he showed that when $(r,n)=1$ semistability is same as the stability. Independently, in \cite{kannan1998}, Kannan showed that semistable locus coincides with stable locus of $G_{r,n}$ if and only if $(r,n)=1$. 
	
	In \cite{kannansardar}, Kannan and Sardar showed that there exists a unique minimal dimensional Schubert variety in $G_{r,n}$ admitting semistable points with respect to a suitable $T$-linearized line bundle. 
	
	One of our motivations behind the study of the GIT quotients of Richardson varieties in $G_{r,n}$ is to produce a variety with nice geometric structures, for example, projective spaces, product of projective spaces or more generally (smooth) toric varieties etc.. 
	
	In this article, we study the GIT quotients of certain Richardson varieties in $G_{r,n}$ for the action of $T$ with respect to the descent of a suitable $T$-linearized line bundle and we show that those are isomorphic to the product of projective spaces as a polarized variety (for more precisely see Theorem~\ref{thm:main}).
	
	The organization of the article is as follows. In \cref{section2} we introduce some notation to define Richardson varieties and recall Standard Monomial Theory on Richardson varieties. We conclude the section by formulating the main result of this article. \cref{section3} and \cref{section4} are devoted to understand the homogeneous coordinate ring of the GIT quotients of the Richardson varieties. In \cref{section3} we develop combinotorial properties of the Young tableaux which arise in the current context and in \cref{section4} we prove our main theorem.
	
\section{Notation and Preliminaries}\label{section2}
We refer to \cite{Hum1}, \cite{Hum2},\cite{LB}, \cite{Mumford}, \cite{Newstead} and \cite{Ses} for preliminaries in algebraic groups, Lie algebras, Standard Monomial Theory and Geometric Invariant Theory.


Let $B~(\supset T)$ be the Borel subgroup of $SL(n,\mathbb{ C})$ consisting of upper triangular matrices. For $1\le i\le n,$ define $\varepsilon_{i}:T\to \mathbb{C}^{\times}$ by  $\varepsilon_{i}(\text{diag}(t_{1},\ldots ,t_{n}))=t_{i}.$ Then  $S:=\{\varepsilon_{i}-\varepsilon_{i+1}| \text{~for all ~} 1\le i\le n-1\}$ forms the set of simple roots of $SL(n, \mathbb{C})$ with respect to $T$ and $B.$ Let $\{\omega_i| 1 \leq i \leq n-1\}$ be the set of fundamental dominant weights corresponding to $S.$ 


Let $I(r,n) := \{\alpha=(\alpha_{1},\alpha_{2},\ldots, \alpha_{r}) | 1\le \alpha_{1}<\alpha_{2}<\cdots <\alpha_{r}\le n\}.$ Let $(e_{1},e_{2},\ldots,e_{n})$ be the standard basis of $\mathbb{C}^n.$  For $\alpha \in I(r,n)$, let $e_{\alpha}=e_{\alpha_1} \wedge e_{\alpha_2} \wedge \cdots \wedge e_{\alpha_r}$. Then $\{e_{\alpha}| \alpha \in I(r,n)\}$ forms a basis of $\wedge^r\mathbb{C}^n.$ Let $\{p_{\alpha}| \alpha\in I(r,n)\}$ be the basis of the dual space $(\wedge^{r}\mathbb{C}^n)^{*},$ which is dual to $\{e_{\alpha}| \alpha \in I(r,n)\}$. These $p_{\alpha}$'s are called Pl\"{u}cker coordinates.

For $1 \leq t \leq n,$ let $V_t$ be the subspace of $\mathbb{C}^n$ spanned by $\{e_1, e_2, \ldots, e_t\}$ and $V^t$ be the subspace of $\mathbb{C}^n$ spanned by $\{e_n, e_{n-1}, \ldots, e_{n-t+1}\}.$ For each $\alpha = (\alpha_{1},\ldots,\alpha_{r}) \in I(r,n)$, the Schubert variety $X_{\alpha}$ associated to $\alpha$ is defined as 
$$X_\alpha=\{U \in G_{r,n} | \text{dim}(U \cap V_{\alpha_t}) \geq t, 1 \leq t \leq r\}.$$
For each $\beta = (\beta_{1},\ldots,\beta_{r}) \in I(r,n),$ the opposite Schubert variety $X^{\beta}$ associated to $\beta$ is defined as 
$$X^\beta=\{U \in G_{r,n} | \text{dim}(U \cap V^{n-\beta_{r-t+1}+1}) \geq t, 1 \leq t \leq r\}.$$
For $\alpha,\beta \in I(r,n),$ the Richardson variety $X^\beta_\alpha$ is defined as $X^\beta \cap X_\alpha.$ Note that $X^\beta_\alpha \neq \emptyset$ if and only if $\beta \leq \alpha$. 

Let $\mathcal{L}(\omega_r)$ be the ample line bundle on $G_{r,n}$ associated to $\omega_r$. For simplicity we also denote the restriction of $\mathcal{L}(\omega_r)$ on $G_{r,n}$ to $X_{\alpha}^{\beta}$ by $\mathcal{L}(\omega_r)$. 

\subsection{Standard Monomial Theory}	
There is a natural partial order on $I(r,n)$, given by \(\alpha \le \beta\) if and only if \(\alpha_{i}\le \beta_i\) for all \(1 \le i \le r\). Let $\tau_1, \tau_2, \ldots, \tau_m \in I(r,n)$. The monomial $p_{\tau_1}p_{\tau_2}\cdots p_{\tau_m} \in H^0(X_{\alpha}^{\beta}, \mathcal{L}(\omega_r)^{\otimes m})$ is said to be standard monomial of degree $m$ if $\beta \leq \tau_1\leq \tau_2\leq \cdots \leq \tau_m \leq \alpha$. 

Let \(M\) and \(N\) be two finite sets. Then a \emph{shuffle permutation \(\sigma\) of \(M\) and \(N\)} is a partition of \(M\cup N\) into two sets \(\sigma(M)\) and \(\sigma(N)\) such that \(|\sigma(M)|=|M|,|\sigma(N)|=|N|\). We can identify all shuffle permutation by \( S_{_{|M|+|N|}}/(S_{_{|M|}}\times S_{_{|N|}}) \).

Let \(\alpha = (\alpha_1, \ldots, \alpha_r),\beta = (\beta_1, \ldots, \beta_r) \in I(r,n)\) such that \(\alpha_{k} > \beta_{k}\) for some \(1 \le k \le r\). Let \(\sigma\) be a shuffle permutation of \(M=\{\alpha_k, \ldots, \alpha_r\}\) and \(N=\{\beta_1, \ldots, \beta_k\}\). Denote by \(\alpha^{\sigma} = (\alpha^{\sigma}_1, \ldots, \alpha^{\sigma}_r)\) and \(\beta^{\sigma} = (\beta^{\sigma}_1, \ldots, \beta^{\sigma}_r)\), the tuples with increasing entries constructed from \(\{\alpha_1, \ldots, \alpha_{k-1}\}\cup \sigma(M)\) and \(\{\beta_{k+1}, \ldots, \beta_r\}\cup \sigma(N)\) respectively. Then the Pl\"ucker relation \(\mathcal{P}(\alpha,\beta,k)\) is
\[
\sum_{\sigma} \pm p_{\alpha^{\sigma}}p_{\beta^{\sigma}}=0 
\]
where sum is over all shuffle permutations of $M$ and $N$. The coefficient is determined by \(sgn(\sigma)\) and the parity required to get tuple \( \alpha^{\sigma} \) and \( \beta^{\sigma} \) with increasing entries. The Grassmannian $G_{r,n} \subseteq \mathbb{P}(\wedge^r \mathbb{C}^n)$ is precisely the zero set of these Pl\"ucker relations. Let \(I(X_{\alpha}^{\beta})\) be the ideal of \(X_{\alpha}^{\beta}\) in \(\mathbb{C}[\wedge^{r}\mathbb{C}^{n}]\). We have following theorem for Richardson variety \(X^{\beta}_{\alpha}\). 
\begin{theorem}
	\begin{enumerate}[label=\arabic*.,ref=\arabic*.,wide,itemsep=0pt, labelwidth=!, labelindent=0pt]
		\item The standard monomials of degree $m$ on $X^{\beta}_{\alpha}$ form a basis of $H^0(X^{\beta}_{\alpha},\mathcal{L}(m\omega_r)).$
		\item The ideal \(I(X^{\beta}_{\alpha})\) in \(\mathbb{C}[\wedge^{r}\mathbb{C}^{n}]\) is generated by Pl\"ucker relations and the set \(\{p_{\tau} | \tau \not\le \alpha \text{ or }\tau \not\ge \beta\}\).
		
\end{enumerate}	\end{theorem}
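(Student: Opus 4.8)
The plan is to prove the two assertions together, since they are intertwined: I would first show that the standard monomials both span $H^0(X^\beta_\alpha,\mathcal{L}(m\omega_r))$ and are linearly independent (which gives part~1), and then deduce part~2 by a rank count comparing the quotient of $\mathbb{C}[\wedge^r\mathbb{C}^n]$ by the proposed relations with the homogeneous coordinate ring of $X^\beta_\alpha$. Throughout I would freely use that $X^\beta_\alpha$ is a reduced, irreducible, normal projective variety on which $\mathcal{L}(\omega_r)$ is very ample, and that $\bigoplus_{m\ge 0}H^0(X^\beta_\alpha,\mathcal{L}(m\omega_r))$ is its homogeneous coordinate ring.

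\textbf{Spanning.} First I would record the vanishing of Pl\"ucker coordinates on $X^\beta_\alpha$. From the defining conditions, a point $U$ lies in $X_\alpha$ exactly when $p_\tau(U)=0$ for every $\tau\not\le\alpha$, and $U$ lies in $X^\beta$ exactly when $p_\tau(U)=0$ for every $\tau\not\ge\beta$; hence $p_\tau$ restricts to $0$ on $X^\beta_\alpha$ whenever $\tau\not\le\alpha$ or $\tau\not\ge\beta$. Next comes the straightening law. Given a nonstandard degree-two monomial $p_\sigma p_\tau$ with $\sigma,\tau$ incomparable, I would choose an index $k$ at which the order fails and apply the Pl\"ucker relation $\mathcal{P}(\sigma,\tau,k)$ to rewrite $p_\sigma p_\tau$ as a signed sum of products $p_{\sigma'}p_{\tau'}$, each strictly smaller in a fixed quadratic term order. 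The key point to verify is that indices falling outside $[\beta,\alpha]$ may simply be discarded, since the corresponding factors already vanish on $X^\beta_\alpha$ by the previous step. Iterating this rewriting terminates because the term order strictly decreases, and expresses an arbitrary degree-$m$ monomial as a combination of standard monomials $p_{\tau_1}\cdots p_{\tau_m}$ with $\beta\le\tau_1\le\cdots\le\tau_m\le\alpha$. Thus the standard monomials span.

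\textbf{Linear independence.} This is the crux, and I would argue it by induction on $\dim X^\beta_\alpha$ through the boundary. The set-theoretic boundary decomposes into Richardson varieties of dimension one less,
\[
\partial X^\beta_\alpha=\Big(\bigcup_{\alpha'\lessdot\alpha}X^\beta_{\alpha'}\Big)\cup\Big(\bigcup_{\beta\lessdot\beta'}X^{\beta'}_\alpha\Big),
\]
on each of which the theorem holds by the inductive hypothesis. Using that Richardson varieties are Frobenius split compatibly with their boundary (equivalently, that they have rational singularities), I would obtain the vanishing $H^i(X^\beta_\alpha,\mathcal{L}(m\omega_r))=0$ for $i>0$ together with surjectivity of restriction to the boundary. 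Exact-sequence bookkeeping over the boundary components then yields the numerical identity $\dim_{\mathbb{C}}H^0(X^\beta_\alpha,\mathcal{L}(m\omega_r))=\#\{\text{standard monomials of degree }m\}$. Combined with the spanning step, the matching of dimensions forces the spanning set to be a basis, which is exactly linear independence and completes part~1. Alternatively one can bypass cohomology and argue independence directly, by ordering the index multisets and evaluating a hypothetical relation at suitable $T$-fixed points of the cells of $X^\beta_\alpha$ to extract a minimal surviving term.

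\textbf{Ideal generators.} Let $J\subseteq\mathbb{C}[\wedge^r\mathbb{C}^n]$ be the ideal generated by the Pl\"ucker relations together with $\{p_\tau\mid \tau\not\le\alpha\text{ or }\tau\not\ge\beta\}$. Since all these elements vanish on $X^\beta_\alpha$, we have $J\subseteq I(X^\beta_\alpha)$. On the other hand, every generator of $J$ is available to the straightening algorithm above, so in each degree $m$ the quotient $\big(\mathbb{C}[\wedge^r\mathbb{C}^n]/J\big)_m$ is spanned by standard monomials and hence has dimension at most $\#\{\text{standard monomials of degree }m\}$. By part~1 this equals $\dim_{\mathbb{C}}\big(\mathbb{C}[\wedge^r\mathbb{C}^n]/I(X^\beta_\alpha)\big)_m$. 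The inclusion $J\subseteq I(X^\beta_\alpha)$ together with equality of Hilbert functions forces $J=I(X^\beta_\alpha)$, giving part~2. The main obstacle in the whole argument is the linear-independence/dimension step: once the vanishing $H^{>0}(X^\beta_\alpha,\mathcal{L}(m\omega_r))=0$ and the boundary decomposition are in hand, the remaining steps are formal.
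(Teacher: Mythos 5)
The paper itself contains no proof of this theorem: it is recalled as background, a known result of Standard Monomial Theory for Richardson varieties, with the reader referred to the sources listed at the start of Section 2 (e.g.\ the references for SMT). So there is no in-paper argument to compare you against; your proposal has to be judged against the standard proofs in that literature. In outline you do reconstruct the standard geometric route: straightening plus vanishing of out-of-range Pl\"ucker coordinates for spanning, cohomology vanishing and induction over the boundary for linear independence, and a Hilbert-function comparison for the ideal statement. The spanning and ideal-generation steps are sound as sketched (one caveat: your blanket assumption that $\bigoplus_m H^0(X^\beta_\alpha,\mathcal{L}(m\omega_r))$ is the homogeneous coordinate ring is exactly projective normality of $X^\beta_\alpha$ in the Pl\"ucker embedding, which is itself one of the outputs of SMT; it should be derived from the same splitting/vanishing technology you invoke, not assumed at the outset).

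Two concrete weaknesses remain in the independence step, which you correctly identify as the crux. First, the phrase ``exact-sequence bookkeeping'' conceals the actual content: the boundary is a union of several Richardson varieties, so computing $H^0$ of it requires handling their pairwise (and higher) intersections, which are again Richardson varieties or unions thereof; the induction must therefore be formulated for unions of Richardson varieties, and one must separately verify that the count of standard monomials satisfies the same inclusion--exclusion recursion. That matching is precisely the hard combinatorial-geometric work, and as written it is asserted rather than proved. Second, your proposed fallback of evaluating a hypothetical relation at $T$-fixed points cannot work: at the fixed point $e_\sigma$ every Pl\"ucker coordinate $p_\tau$ with $\tau\neq\sigma$ vanishes, so a monomial $p_{\tau_1}\cdots p_{\tau_m}$ evaluates to zero unless $\tau_1=\cdots=\tau_m=\sigma$. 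Fixed-point evaluation therefore detects only the monomials $p_\sigma^m$ and cannot separate general standard monomials. The classical direct argument instead restricts a minimal dependence relation to smaller Richardson varieties $X^\beta_{\alpha'}$ and $X^{\beta'}_\alpha$ and inducts on dimension and degree, using irreducibility and reducedness to factor out $p_\alpha$ or $p_\beta$; if you want a self-contained independence proof, that is the route to take.
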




%

Let \(\lambda = m\omega_{r}\) be a dominant weight. A Young diagram associated to \(\lambda\) (which is also denoted by $\lambda$) contains \(r\) rows with $m$ boxes in each row. A Young tableau \(\Gamma\) of shape \(\lambda\) is a filling of Young diagram \(\lambda\) with the integers in \(\{1,\ldots,n\}\). We denote by \(col_{i}(\Gamma)\) the \(i\)-th  column of \(\Gamma\). Let \(\Gamma_1\) and \(\Gamma_2\) be two Young tableaux of shape \(d_{1}\omega_r\) and \(d_{2}\omega_r\) respectively. Then we define the product tableau \(\Gamma_1\Gamma_2\) of shape \((d_{1}+d_{2})\omega_r\) such that for \(1 \le i \le d_{1}\), \(col_{i}(\Gamma_1\Gamma_2) = col_{i}(\Gamma_1)\) and for \(d_{1}+1 \le i \le d_{1}+d_{2}\), \(col_{i}(\Gamma_{1}\Gamma_2) = col_{i-d_{1}}(\Gamma_2)\). A Young tableau is called column standard if the entries along any column are strictly increasing from top to bottom and called row semistandard if the entries along any row are non-decreasing from left to right. A Young tableau is called semistandard if it is column standard and row semistandard. 

For any two integers $i < j,$ we denote the integers in $\{i, i+1,\ldots, j\}$ by $[i, j].$ Then the boxes in Young diagram \(\lambda\) are indexed by the set \([1,r]\times[1,m]\).  We denote the entry in the box \((i,j)\) by \(\Gamma(i,j)\) .

For any column standard tableau  \(\Gamma\) of shape \(\lambda\), we associate a monomial $f_{\Gamma}$ by taking product of Pl\"ucker coordinates indexed by columns of \(\Gamma\). Conversely, to any monomial \(p_{\tau_1}\cdots p_{\tau_m}\), we associate the column standard tableau \(\Gamma\) of shape \(m\omega_r\) such that \(col_{i}(\Gamma) = \tau_i\). Note that for $\alpha, \beta\in I(r,n),$ $f_{\Gamma}$ is a non-zero standard monomial on $X_{\alpha}^{\beta}$ if and only if $\Gamma$ is semistandard and the last column of $\Gamma$ is less than or equal to $\alpha$ and the first column is greater than or equal to $\beta$. We use \(\Gamma\) and \(f_{\Gamma}\) interchangeably using the identification above.

For \(d \in \mathbb{Z}_{+}\), let \(\lambda_{d} = nd\omega_{r}\). Let $\Gamma$ be a Young tableau of shape $\lambda_d$. For a positive integer $1\leq i\leq n$, we denote by $wt_{i}(\Gamma)$, the number of boxes of $\Gamma$ containing the integer $i$. The weight of $\Gamma$ is defined as $wt(\Gamma):=wt_{1}(\Gamma)\varepsilon_1+\cdots+wt_{n}(\Gamma)\varepsilon_{n}$  (see \cite[Section 2, p.336]{LB}). We conclude this section with the key result that a monomial $f_{\Gamma}\in H^0(G_{r,n},\mathcal{L}(\lambda_d))$ is $T$-invariant if and only if \(wt_i(\Gamma) = wt_j(\Gamma)\) for all $1 \leq i,j \leq n$. It follows that $wt_i(\Gamma)=dr$ for all $1 \leq i \leq n$ $($See \cite[Lemma 3.1, p.4]{NP}$)$.

%
%


\subsection{Main result}

Recall that by \cite[Corollary 1.9, p.~85]{kannansardar}, there exists a unique element $w \in I(r,n)$ of minimal length such that $w(\omega_r) \leq 0.$ Thus, by \cite[Lemma 2.1, p.470]{kannanpattanayak} $X(w)^{ss}_T(\mathcal{L}(\omega_r))$ is non-empty. It follows that $w =(a_1, a_2, \ldots, a_r),$ where $a_i$ is the smallest positive integer such that $ra_i \geq ni$ (see \cite[Proposition 2.2, p.895]{Bakshi}).  Thus for $1 \leq i \leq r$, \(a_i = \lceil\frac{ni}{r}\rceil\). 

 We set $a_0:=0.$ For \(0 \le i \le r,\) let \(c_i = ra_i-ni\). Note that $c_0=0$. Since $a_r=n$ we have $c_r=0$.  Now we have the following lemma.  
\begin{lemma}\label{lemma:property_c}
	For \(1 \leq i \leq r-1\) we have $0 < c_i <r$.
\end{lemma}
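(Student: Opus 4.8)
The plan is to show the two inequalities $c_i > 0$ and $c_i < r$ separately for $1 \le i \le r-1$, using the defining property that $a_i$ is the smallest positive integer with $ra_i \ge ni$ together with the coprimality hypothesis $(r,n)=1$.

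First I would establish $c_i \ge 0$ directly: since $c_i = ra_i - ni$ and $a_i$ satisfies $ra_i \ge ni$ by definition, we immediately get $c_i \ge 0$. To upgrade this to the strict inequality $c_i > 0$, I would invoke coprimality. If we had $c_i = 0$, then $ra_i = ni$, so $r \mid ni$; since $(r,n)=1$ this forces $r \mid i$, which is impossible for $1 \le i \le r-1$. Hence $c_i \ne 0$, and combined with $c_i \ge 0$ we conclude $c_i > 0$.

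For the upper bound $c_i < r$, I would use the minimality of $a_i$. Because $a_i$ is the \emph{smallest} positive integer with $ra_i \ge ni$, the integer $a_i - 1$ must fail this inequality, i.e. $r(a_i-1) < ni$, which rearranges to $ra_i - ni < r$, that is, $c_i < r$. One should check that $a_i - 1$ is a legitimate value to test, i.e. that $a_i \ge 1$; this holds since for $1 \le i \le r-1$ we have $ni \ge n > 0$, so $a_i$ is a genuine positive integer and $a_i - 1 \ge 0$, making the minimality argument valid.

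I do not expect any serious obstacle here; the statement is elementary once the two defining features of $a_i$ (the inequality $ra_i \ge ni$ and its minimality) are separated out and coprimality is applied to rule out the boundary case $c_i = 0$. The only point requiring a moment of care is ensuring the strict inequality at the lower end genuinely comes from $(r,n)=1$ rather than from the definition alone, since without coprimality $c_i = 0$ could occur (as it does at $i=0$ and $i=r$).
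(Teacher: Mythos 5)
Your proof is correct and follows essentially the same route as the paper: $c_i \ge 0$ from the defining inequality $ra_i \ge ni$, coprimality of $r$ and $n$ to exclude $c_i = 0$, and minimality of $a_i$ (via $r(a_i - 1) < ni$) to get $c_i < r$. The only difference is that you spell out the divisibility argument ($r \mid ni$ and $(r,n)=1$ force $r \mid i$, impossible for $1 \le i \le r-1$) which the paper leaves implicit in the phrase ``since $r$ and $n$ are coprime we have $c_i \neq 0$.''
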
 

\begin{proof}
By the definition of $c_i$ we have $c_i \geq 0$. Since $r$ and $n$ are coprime we have $c_i \neq 0$. Thus we have $c_i > 0$. If $c_i \geq r$ then $r(a_i-1) \geq ni$ which is a contradiction to the fact that $a_i$ is the smallest positive integer such that $ra_i \geq ni$. Thus we have \(c_i < r\).
\end{proof}

Let $v \in I(r,n)$ be the unique maximal element such that $v(\omega_r)\geq 0$. Then we have $w=cv,$ for some Coxeter element $c$ such that $l(w)=l(v)+(n-1)$. Further, we have $v=(1,a_1,a_2,\ldots,a_{r-1}).$ It is natural to study the GIT quotient of Richardson variety $X^{\beta}_w$ for some $\beta \leq v$.   

For a sequence of integers $l_i$ $(1 \leq i \leq r-1)$ such that $a_{i-1}+2 \leq l_i \leq a_i$, let $\underline{l}=(1,l_1, l_2, \ldots, l_{r-1})$ and 
$v_{\underline{l}} = (1, l_{1}, l_{2}, \ldots, l_{r-1}).$ Note that $v_{\underline{l}} \leq v.$ By \cite[Lemma 2.1, p.470]{kannanpattanayak} $ (X^{v_{\underline{l}}}_{w})^{ss}_T(\mathcal{L}(\omega_r))$ is non-empty. We set $l_0:=1$ and $l_r:=a_r+1$. Note that we have following inequalities which we use later in throughout the document. 
\begin{equation}\label{sequence} 
	a_{0}< a_0+2 \leq l_1 \leq a_1< \cdots \leq a_{i-1} < a_{i-1}+2 \leq l_i \leq a_i< \cdots \leq a_{r-1} < a_{r-1}+2 \leq l_r 
	\end{equation}

We recall that by \cite[Theorem 3.10, p.764]{Kumar} the line bundle $\mathcal{L}(n\omega_r)$ descends to a line bundle on the GIT quotient $\TmodXvw{T}{G_{r,n}}{\mathcal{L}(n\omega_r)}$. In this article, we  study the torus quotients of the Richardson varieties $X^{v_{\underline{l}}}_{w}$ with respect to the descent of the line bundle $\mathcal{L}(n\omega_r)$. Note that $\TmodXvw{T}{X^{v_{\underline{l}}}_{w}}{\mathcal{L}(n\omega_r)} = Proj(R)$ where \(R = \oplus_{d \in \mathbb{Z}_{\ge 0}} R_d\) and $R_d= H^0(X^{v_{\underline{l}}}_{w}, \mathcal{L}(n\omega_r)^{\otimes d})^T$.
The main theorem of this article is the following:
\begin{theorem}\label{thm:main}
	The GIT quotient $\TmodXvw{T}{X^{v_{\underline{l}}}_{w}}{\mathcal{L}(n\omega_r)}$ is isomorphic to ${\mathbb P}^{a_1-l_1} \times {\mathbb P}^{a_2-l_2} \times \cdots \times {\mathbb P}^{a_{r-1}-l_{r-1}}$ and it is embedded via the very ample line bundle $\mathcal{O}_{\mathbb{P}^{a_1-l_1}}(c_1) \boxtimes \mathcal{O}_{\mathbb{P}^{a_2-l_2}}(c_2) \boxtimes \cdots \boxtimes \mathcal{O}_{\mathbb{P}^{a_{r-1}-l_{r-1}}}(c_{r-1})$. 
\end{theorem}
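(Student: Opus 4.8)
The plan is to compute the graded ring $R=\bigoplus_{d\ge 0}R_d$ explicitly and to recognize it as the multihomogeneous coordinate ring of the asserted product of projective spaces. By the Standard Monomial Theory recalled above, $R_d=H^0(X^{v_{\underline{l}}}_{w},\mathcal{L}(nd\omega_r))^T$ has as a basis the set of semistandard Young tableaux $\Gamma$ of shape $nd\omega_r$ whose columns $\tau_1\le\cdots\le\tau_{nd}$ satisfy $v_{\underline{l}}\le\tau_1$ and $\tau_{nd}\le w$, subject to the $T$-invariance condition $wt_i(\Gamma)=dr$ for all $1\le i\le n$. So the first step is purely combinatorial: describe these tableaux. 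Writing the column bound as $l_{i-1}\le\Gamma(i,j)\le a_i$ for each row $i$ (with $l_0=1$, $a_r=n$), the inequalities in \eqref{sequence} show that the values split into two types: the rigid values $k\in[a_{i-1}+1,l_i-1]$, which can occur only in row $i$ and hence contribute all $dr$ of their copies there, and the interface values $k\in[l_i,a_i]$, which can occur only in rows $i$ and $i+1$. The interface blocks for distinct $i$ involve the intervals $[l_i,a_i]$, which are pairwise disjoint because $l_{i+1}\ge a_i+2$, so they do not interact at the level of content.

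Next I would solve the row-count bookkeeping. Since each row has $nd$ boxes and each value has total multiplicity $dr$, letting $S_i$ be the number of interface-$i$ boxes placed in row $i$ yields the recursion $S_i-S_{i-1}=d\,(n-(l_i-l_{i-1})r)$ with $S_0=0$, whose solution shows (using $c_i=ra_i-ni$) that exactly $dc_i$ of the interface-$i$ boxes are pushed into row $i+1$. Consequently a $T$-invariant semistandard tableau is determined by, for each $1\le i\le r-1$, a multiset $\mu_i$ of size $dc_i$ drawn from the $a_i-l_i+1$ symbols $\{l_i,\ldots,a_i\}$: once the $\mu_i$ are fixed the content of every row is forced, and a weakly increasing row is determined by its content. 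The number of choices of $\mu_i$ is $\binom{a_i-l_i+dc_i}{dc_i}=\dim H^0(\mathbb{P}^{a_i-l_i},\mathcal{O}(dc_i))$, so the product over $i$ already matches $\dim R_d$ with the Hilbert function of the target variety under the claimed polarization.

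The main obstacle is to upgrade this numerical bijection into an isomorphism of graded rings, and there are two points to verify. First, realizability and well-definedness: every tuple $(\mu_1,\ldots,\mu_{r-1})$ of multisets of the correct sizes must actually give a column-standard (hence semistandard) tableau. This is exactly where the gaps $l_i\ge a_{i-1}+2$ are essential, as they force the largest kept interface-$i$ entries of row $i$ to stay strictly below the pushed-down entries $\mu_i$ at the head of row $i+1$; I expect this column-strictness check, carried out position by position inside the overlap interval $[l_i,a_i]$, to be the technical heart of \cref{section3}. Second, compatibility with multiplication: I would set $U_i$ to be the $(a_i-l_i+1)$-dimensional space with basis the interface-$i$ symbols, identify $R_1$ with $\bigotimes_{i=1}^{r-1}\mathrm{Sym}^{c_i}(U_i)$, and show that the product of two invariant tableaux in the coordinate ring of $X^{v_{\underline{l}}}_{w}$ straightens to the unique invariant tableau whose interface multisets are the unions of those of the factors. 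Because the weight of a product is additive and the interface data is recovered from the weight together with the row in which each value lands, straightening cannot mix the disjoint interface intervals and must return a single basis element.

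Putting these together I obtain a surjective graded ring homomorphism $S\to R$, where $S$ is the graded ring with $S_d=\bigotimes_{i=1}^{r-1}\mathrm{Sym}^{dc_i}(U_i)$, and the dimension count of the previous paragraph shows it is an isomorphism in every degree. Since $\mathrm{Sym}^{c_i}(U_i)\cdot\mathrm{Sym}^{c_i}(U_i)=\mathrm{Sym}^{2c_i}(U_i)$, the ring $S$ is generated in degree one, and it is precisely the section ring $\bigoplus_d H^0\bigl(\prod_{i=1}^{r-1}\mathbb{P}^{a_i-l_i},\,\mathcal{O}(dc_1,\ldots,dc_{r-1})\bigr)$. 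Taking $\mathrm{Proj}$ therefore identifies $\TmodXvw{T}{X^{v_{\underline{l}}}_{w}}{\mathcal{L}(n\omega_r)}$ with $\mathbb{P}^{a_1-l_1}\times\cdots\times\mathbb{P}^{a_{r-1}-l_{r-1}}$, embedded by the very ample line bundle $\mathcal{O}_{\mathbb{P}^{a_1-l_1}}(c_1)\boxtimes\cdots\boxtimes\mathcal{O}_{\mathbb{P}^{a_{r-1}-l_{r-1}}}(c_{r-1})$, which is the statement of the theorem.
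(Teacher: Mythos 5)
Your combinatorial layer is essentially the paper's: the split of values into rigid ones $[a_{i-1}+1,l_i-1]$ and interface ones $[l_i,a_i]$, the count that exactly $dc_i$ interface-$i$ entries are pushed into row $i+1$, and the bijection between invariant tableaux and tuples of multisets reproduce \cref{lemma:boxes}, \cref{cor:BicontainsCi}, \cref{lemma:Tinv-eq-condition} and \cref{lemma:bij}, and your identification of the target ring with the Segre--Veronese coordinate ring matches \cref{sec:proofmain}. The genuine gap is in the step you compress into one sentence: that $f_{\Gamma_1}f_{\Gamma_2}$ straightens to the \emph{single} invariant tableau whose interface multisets are the unions of those of the factors, ``because the weight of a product is additive and the interface data is recovered from the weight together with the row in which each value lands.'' This justification is circular. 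Every basis element of $R_d$ has the same weight ($wt_i=dr$ for all $i$), so weight additivity only tells you that the straightened expansion of $f_{\Gamma_1}f_{\Gamma_2}$ is some linear combination of basis elements of $R_{d_1+d_2}$; it cannot single out one of them, let alone show the expansion has exactly one term with coefficient $1$. And the interface data $\mathtt{z}$ is precisely a record of \emph{row placement}, which Pl\"ucker straightening does not preserve in general: already on $G_{2,4}$, straightening the non-standard monomial $p_{14}p_{23}$ gives $p_{13}p_{24}-p_{12}p_{34}$ --- two terms, with entries moving between rows, all of the same weight. So ``straightening cannot mix the intervals and must return a single basis element'' is exactly the assertion that needs proof, not a consequence of weight bookkeeping. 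Note also that without it your map $S\to R$ is not even well defined: to define a ring homomorphism out of the Segre--Veronese ring $S$ you must verify that its binomial quadratic relations hold among the proposed images in $R$, which is the same multiplicativity statement.

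The paper closes this gap in \cref{sec:pluckrel}, and the mechanism is special to the Richardson variety $X^{v_{\underline{l}}}_w$ rather than to any weight argument: because most Pl\"ucker coordinates vanish on $X^{v_{\underline{l}}}_w$, the relation $\mathcal{P}(\alpha,\beta,k)$ applied to a pair of columns that both satisfy \ref{property:P1} (or both satisfy \ref{property:P2}) collapses to a binomial $p_{\alpha}p_{\beta}=p_{\alpha^{\sigma}}p_{\beta^{\sigma}}$ in which $\sigma$ merely swaps the two offending entries inside their row, so the row data $\mathtt{z}$ is preserved (\cref{lemma:firstpartrel}, \cref{lemma:secondpartrel}). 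Even granting this, one cannot straighten $\Gamma_1\Gamma_2$ directly, since an arbitrary pair of its columns need satisfy neither \ref{property:P1} nor \ref{property:P2}; the paper first splits each $\Gamma_i$ at column $rd_i$ into $\Gamma_i^{(1)}\Gamma_i^{(2)}$ using $\Gamma(i,rd)=a_{i-1}+1$ (\cref{cor:colrd}, \cref{def:split}), regroups the two P1-parts and the two P2-parts, straightens each group by the binomial moves, and concatenates (\cref{thm:straightenTinv}). Your outline needs this entire layer --- vanishing of coordinates forcing binomial relations, plus the splitting device --- before the graded ring isomorphism you describe exists; the dimension count alone cannot substitute for it.
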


This result generalizes the earlier work of Kannan and Nayek (see \cite{kannannayek}) using different combinatorial approach.   

	\section{Structure of \(T\)-Invariant Tableaux}\label{section3}
	
	We fix an integer \(d \in \mathbb{Z}_{\geq 1}\) throughout this section. Let \(\mathtt{ST}(\lambda_d)\) denotes the set of all $T$-invariant semistandard Young tableaux of shape \(\lambda_d\) associated to non-zero standard monomials in $R_d$. Set \[\mathtt{ST} := \bigcup_{d\in \mathbb{Z}_{\ge 1}}\mathtt{ST}(\lambda_d).\]
	Note that a tableau \(\Gamma\) is in \(\mathtt{ST}(\lambda_d)\) if and only if 
	\begin{enumerate}[label=(\alph*), ref=\text{Condition}~(\alph*), itemsep=0pt]
		\item\label{Tinvariance:condition1} $\Gamma$ is semistandard.
		\item\label{Tinvariance:condition2} $\Gamma$ contains each integer \(1 \leq k \leq n\) with multiplicity \(rd\).
		\item\label{Tinvariance:condition3} $\Gamma(i,j) \in \{l_{i-1},\ldots,a_{i}\}$ for \(1 \le i \le r\) and  for \(1 \le j \le nd\).
	\end{enumerate} 
	In this section we study the combinatorial properties of tableaux which are going to arise while studying the algebra \(R\). In \cref{T_inv_eq_condition} we give an equivalent condition for a Young tableau of shape $\lambda_d$ to be in $\mathtt{ST}(\lambda_d)$. In \cref{lattice_bij} we define a set of integer points (\cref{def:polytope}) and construct a bijection between the set of integer points and \(\mathtt{ST}(\lambda_d)\). In \cref{splitting} we associate two tableaux \(\Gamma^{(1)},\Gamma^{(2)}\) to a tableau \(\Gamma \in \mathtt{ST}(\lambda_d)\) and study their combinatorial properties which we need to understand the Pl\"{u}cker relations.
	
	\subsection{Equivalent condition for \(T\)-invariance}\label{T_inv_eq_condition}
	
	Let \(\Gamma\) be a Young tableau of shape $\lambda_d$. The main purpose of this section is to state \cref{lemma:Tinv-eq-condition} in which we give equivalent condition for $\Gamma \in \mathtt{ST}(\lambda_d)$. The proof of \cref{lemma:Tinv-eq-condition} is bit technical and involves manipulations of entries in the tableau \(\Gamma\). To make article readable we have moved the proof in \cref{proof:Tinv_eq_condition}.
	\begin{lemma} Let \(\Gamma \in \mathtt{ST}(\lambda_d)\). For $1 \leq i \leq r$, the boxes in \(\Gamma\) which contain integers in \(\{1,\ldots,a_i\}\) are \([1,i] \times [1,dn] \cup \{i+1\}\times [1,dc_i].\) \label{lemma:boxes} 
	\end{lemma}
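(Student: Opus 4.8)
The plan is to decompose the rows of $\Gamma$ into three zones determined by \ref{Tinvariance:condition3} together with the inequalities \eqref{sequence}, and then to fix the precise cut-off inside the single ambiguous row by a global multiplicity count coming from \ref{Tinvariance:condition2}. First I would confine the occurrences of the integers in $\{1,\dots,a_i\}$. By \ref{Tinvariance:condition3} every entry of row $j$ lies in $\{l_{j-1},\dots,a_j\}$. For $1\le j\le i$ this forces all entries of rows $1,\dots,i$ to be at most $a_j\le a_i$, so the whole block $[1,i]\times[1,dn]$ consists of integers $\le a_i$. For $j\ge i+2$ every entry is at least $l_{j-1}\ge l_{i+1}$, and \eqref{sequence} gives $l_{i+1}\ge a_i+2>a_i$; hence rows $i+2,\dots,r$ contain no integer $\le a_i$. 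Therefore every occurrence of an integer in $\{1,\dots,a_i\}$ sits in rows $1,\dots,i+1$, with rows $1,\dots,i$ filled entirely by such integers.

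Next I would locate the remaining such entries inside row $i+1$. Since $\Gamma$ is row semistandard (\ref{Tinvariance:condition1}), its $(i+1)$-th row is non-decreasing, so the boxes holding integers $\le a_i$ form an initial segment $\{i+1\}\times[1,x]$ for some $x\ge 0$. It remains to compute $x$. By \ref{Tinvariance:condition2} each of the $a_i$ integers $1,\dots,a_i$ appears exactly $rd$ times, giving $a_i\,rd$ boxes with entries $\le a_i$ in total, of which the first $i$ rows supply $i\,dn$. Hence
\[
x \;=\; a_i\,rd - i\,dn \;=\; d\,(ra_i-ni) \;=\; d\,c_i,
\]
using the definition $c_i=ra_i-ni$. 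This produces exactly $[1,i]\times[1,dn]\cup\{i+1\}\times[1,dc_i]$, as claimed. The edge case $i=r$ is consistent because $c_r=0$, and \cref{lemma:property_c} (which gives $0<c_i<r<n$) guarantees that the segment of length $dc_i$ genuinely fits inside row $i+1$, which has $dn$ boxes.

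I do not expect a serious obstacle here; the argument is essentially a confinement-plus-counting bookkeeping. The only delicate points are making the three-zone split exhaustive and correctly invoking \eqref{sequence} to separate row $i+1$ from the rows strictly below it, after which the arithmetic identity $i\,dn+dc_i=rd\,a_i$ closes the count exactly and leaves no slack.
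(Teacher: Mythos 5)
Your proof is correct and follows essentially the same route as the paper's: confine all occurrences of $\{1,\ldots,a_i\}$ to rows $1,\ldots,i+1$ using \labelcref{Tinvariance:condition3} and the inequalities \eqref{sequence}, then pin down the initial segment of row $i+1$ by the multiplicity count $a_i rd - i\,dn = dc_i$ from \labelcref{Tinvariance:condition2}. The only (harmless) differences are that you apply \labelcref{Tinvariance:condition3} boxwise, which lets you treat all $i \le r-1$ uniformly where the paper splits into the cases $i=r-1$ and $i\le r-2$, and that you make explicit the row-semistandardness step showing the boxes with entries $\le a_i$ in row $i+1$ form an initial segment, which the paper leaves implicit.
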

	\begin{proof} 
	For $i=r$ the claim is true.
	
	Let $i=r-1$. By \labelcref{Tinvariance:condition3} \(\Gamma(r-1,dn) \leq a_{r-1}\). Since $\Gamma(r-1,dn)$ is the largest entry in first \(r-1\) rows, all boxes in first $r-1$ rows contain integers in \(\{1,\ldots,a_{r-1}\}\). From \labelcref{Tinvariance:condition2} the number of boxes in $\Gamma$ containing integers in $\{1,\ldots,a_{r-1}\}$ are $dra_{r-1}$. Hence number of boxes in row $r$ which contain integers in \(\{1,\ldots,a_{r-1}\}\) is \(dra_{r-1} - dn(r-1) (=dc_{r-1})\). Therefore, the boxes in \(\Gamma\) which contain integers in \(\{1,\ldots,a_{r-1}\}\) are \([1,r-1] \times [1,dn] \cup \{r\}\times [1,dc_{r-1}]\).
	
	Let $1 \leq i \leq r-2$. The proof of this case is similar to the previous case with some minor changes. Since $\Gamma(i+2,1) \geq l_{i+1} > a_{i},$ the integers in \(\{1,\ldots,a_i\}\) cannot appear in rows $\{i+2, \ldots, r\}.$ Further, since \(\Gamma(i,dn) \leq a_{i}\) (\labelcref{Tinvariance:condition3}) is the largest entry in first \(i\) rows, all boxes in first $i$ rows contain integers in \(\{1,\ldots,a_i\}\). From \labelcref{Tinvariance:condition2} the number of boxes in $\Gamma$ containing integers in $\{1,\ldots,a_{i}\}$ are $dra_{i}$. Hence number of boxes in row $i+1$ which contain integers in \(\{1,\ldots,a_i\}\) is \(dra_{i} - dni (=dc_i)\). Therefore, the boxes in \(\Gamma\) which contain integers in \(\{1,\ldots,a_i\}\) are \([1,i] \times [1,dn] \cup \{i+1\}\times [1,dc_{i}]\).
	\end{proof}
  Note that the boxes in Young diagram $\lambda_d$ are $[1,r] \times [1,dn]$. We partition these boxes and integers in \(\{1,2,\ldots,n\}\) as follows:
	\[
	[1,r]\times[1,dn] = \bigsqcup_{i=1}^{r} B_i \text{ and }
	\{1,2,\ldots,n\} = \bigsqcup_{i=1}^{r} C_i		
	 	\]
	where
	\[
	\begin{array}{ccll}
	&B_i = B_{i,1} \sqcup B_{i,2}, &B_{i,1} =\{i\}\times[dc_{i-1}+1,dn],&B_{i,2}=\{i+1\} \times [1,dc_{i}],\\
		&C_i = C_{i,1} \sqcup C_{i,2}, &C_{i,1} = \{a_{i-1}+1,\ldots,l_i-1\}, &C_{i,2}=\{l_i,\ldots,a_i\}.
			\end{array}
	\]
We define \(C :=\sqcup_{i=1}^{r}C_{i,2}\).  Since $c_r=0,$ we have $B_{r,2}$ is empty and since $l_r=a_r+1,$ we have $C_{r,2}$ is empty.

Note that definition of \(B_{i}\) depends on \(d\) but we purposely skip to include this in the notation because it has little or no advantage and skipping it makes notation easier. 

\begin{corollary}
	Let \(\Gamma \in \mathtt{ST}(\lambda_d)\). For \(1 \le i \le r\), the boxes in \(\Gamma\) which contain integers in \(C_{i}\) are \(B_{i}\). Moreover for \(1 \le i \le r-1\), the boxes in \(\Gamma\) which contain integers in \(C_{i,2}\) are \(B_{i,2}\).
	\label{cor:BicontainsCi}
\end{corollary}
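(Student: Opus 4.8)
The plan is to derive both assertions from \cref{lemma:boxes} by a single nested-subtraction argument, reinforced by the lower bound on row $i+1$ recorded in \labelcref{Tinvariance:condition3}.

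First I would set up notation for the first assertion. For $0\le i\le r$ write $A_i := [1,i]\times[1,dn]\cup\{i+1\}\times[1,dc_i]$ for the set of boxes of $\Gamma$ whose entry lies in $\{1,\ldots,a_i\}$; this is exactly the conclusion of \cref{lemma:boxes}, and with the conventions $a_0=0$, $c_0=0$ it reads $A_0=\emptyset$. Since every box of $\Gamma$ carries a single entry and $\{1,\ldots,a_{i-1}\}\subseteq\{1,\ldots,a_i\}$, the boxes whose entry lies in $C_i=\{a_{i-1}+1,\ldots,a_i\}$ are precisely $A_i\setminus A_{i-1}$. As $A_{i-1}\subseteq A_i$, I would evaluate this difference row by row: rows $1,\ldots,i-1$ cancel, row $i$ contributes $\{i\}\times[dc_{i-1}+1,dn]=B_{i,1}$, and row $i+1$ contributes $\{i+1\}\times[1,dc_i]=B_{i,2}$. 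Hence the boxes containing integers of $C_i$ are $B_{i,1}\sqcup B_{i,2}=B_i$, which is the first assertion.

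For the refinement I would localize to row $i+1$, which decomposes as $B_{i,2}\sqcup B_{i+1,1}$. By the first assertion the boxes of $B_{i,2}$ lie in $B_i$, so their entries belong to $C_i$ and are at most $a_i$; being in row $i+1$, \labelcref{Tinvariance:condition3} forces those entries to be at least $l_i$. The two bounds together place every entry of $B_{i,2}$ in $C_{i,2}=\{l_i,\ldots,a_i\}$. Conversely, the complementary boxes $B_{i+1,1}$ of row $i+1$ lie in $B_{i+1}$, so their entries belong to $C_{i+1}$ and are therefore at least $a_i+1$, hence never in $C_{i,2}$. This identifies $B_{i,2}$ with exactly the boxes of row $i+1$ carrying integers of $C_{i,2}$. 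The same comparison also shows that the integers of $C_{i,1}=\{a_{i-1}+1,\ldots,l_i-1\}$, being strictly below $l_i$, are excluded from row $i+1$ by \labelcref{Tinvariance:condition3} and so occupy only boxes of $B_{i,1}$.

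The step I expect to demand the most care is this second assertion, because the $C_{i,2}$-integers are not confined to row $i+1$: they may also fill the right-hand tail of $B_{i,1}$ in row $i$, so the clean identification with $B_{i,2}$ survives only once attention is restricted to row $i+1$. Making that restriction rigorous amounts to playing the row constraint \labelcref{Tinvariance:condition3} against the strict separations $a_{i-1}<l_i\le a_i<a_i+1\le a_{i+1}$ that organize the chain \eqref{sequence}; these inequalities are precisely what keep the smaller block $C_{i,1}$ out of row $i+1$ and the larger block $C_{i+1}$ out of $B_{i,2}$.
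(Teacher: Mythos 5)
Your proof is correct and takes essentially the same route as the paper: the first assertion is the nested-difference computation based on \cref{lemma:boxes} (which the paper packages as induction on $i$ over the decompositions $\{1,\ldots,a_i\}=\sqcup_{k\le i}C_k$ and $\sqcup_{k\le i}B_k$), and the second is the paper's own intersection argument $C_i \cap \{l_i,\ldots,a_{i+1}\} = C_{i,2}$ applied to the entries of $B_{i,2}$ in row $i+1$. Your caveat about the second assertion is also accurate: read as a set equality over all boxes it fails (e.g.\ $\Gamma(i,nd)=a_i\in C_{i,2}$ occupies a box of $B_{i,1}$), and the paper's proof, like yours, establishes precisely the row-$(i+1)$ version, namely that entries of $B_{i,2}$ lie in $C_{i,2}$ while entries of $B_{i+1,1}$ lie in the disjoint set $C_{i+1}$.
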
 
\begin{proof}
	Let \(1 \le i \le r\). Observe that \(\{1,\ldots,a_i\} = \sqcup_{k=1}^{i} C_{k}\) and \([1,i] \times [1,dn] \cup \{i+1\}\times [1,dc_i] = \sqcup_{k=1}^{i} B_{k}\). From \cref{lemma:boxes} and using induction on $i$, it follows that the boxes in \(\Gamma\) which contains integers from \(C_{i}\) are \(B_{i}\).
	
	Let \(1 \le i \le r-1\). Since \(\Gamma(i+1,j) \in \{l_i,\ldots, a_{i+1}\}\) for any $j$, the boxes in \(B_{i,2}\) contain integers in \(C_{i} \cap \{l_{i},\ldots,a_{i+1}\} = \{l_{i},\ldots,a_{i}\} = C_{i,2}\) (see \ref{sequence}).
\end{proof}

\begin{corollary}\label{cor:identicalBi2}
	Let $\Gamma_1, \Gamma_2 \in \mathtt{ST}(\lambda_d)$. For $1\leq i \leq r$, if $\Gamma_1$ and $\Gamma_2$ contain identical entries in $B_{i,2}$ then $\Gamma_1=\Gamma_2.$ 
\end{corollary}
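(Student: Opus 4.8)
The plan is to show that, for any $\Gamma \in \mathtt{ST}(\lambda_d)$, the entries of $\Gamma$ in the boxes $\bigsqcup_{i=1}^{r} B_{i,1}$ are completely forced by the entries in $\bigsqcup_{i=1}^{r} B_{i,2}$. Since $[1,r]\times[1,dn] = \bigsqcup_{i=1}^{r}\bigl(B_{i,1} \sqcup B_{i,2}\bigr)$, this yields the statement: if $\Gamma_1$ and $\Gamma_2$ agree on every $B_{i,2}$, then they agree on every $B_{i,1}$ as well, hence $\Gamma_1 = \Gamma_2$. First I would fix $i$ with $1 \le i \le r$ and look only at row $i$, whose boxes are $B_{i-1,2} \sqcup B_{i,1}$, occupying columns $[1,dc_{i-1}]$ and $[dc_{i-1}+1,dn]$ respectively. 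Thus $B_{i,1}$ is a \emph{contiguous} block of columns forming the right-hand end of row $i$.

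The key observation is supplied by \cref{cor:BicontainsCi}: every box of $B_{i,1}\subseteq B_i$ carries an integer from $C_i = C_{i,1}\sqcup C_{i,2}$, and moreover each integer $k\in C_{i,1}$ can occur \emph{only} inside $B_{i,1}$ (it lies in $C_i$, hence only in $B_i$, and it cannot sit in $B_{i,2}$, all of whose entries belong to $C_{i,2}$). Combined with \labelcref{Tinvariance:condition2}, this lets me read off the multiplicity with which each value occurs in $B_{i,1}$: every $k \in C_{i,1}$ occurs there exactly $rd$ times, while every $k \in C_{i,2}$ occurs there $rd - m_k$ times, where $m_k$ denotes the number of boxes of $B_{i,2}$ holding $k$ (a quantity fixed by the given data). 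I expect the main obstacle to be exactly this bookkeeping, namely justifying that the only two places a value of $C_i$ can appear are $B_{i,1}$ and $B_{i,2}$, so that the $B_{i,1}$-multiplicities are genuinely determined by the $B_{i,2}$-entries; this is precisely the content of \cref{cor:BicontainsCi}, so once that input is in place the step is routine.

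Finally, since $\Gamma$ is row semistandard by \labelcref{Tinvariance:condition1}, its entries along the contiguous block $B_{i,1}$ are non-decreasing, and a non-decreasing arrangement of a prescribed multiset is unique. Hence the multiset computed above pins down the entries of $B_{i,1}$ exactly. Letting $i$ range over $1,\dots,r$ recovers all of $\bigsqcup_{i} B_{i,1}$ from the data in $\bigsqcup_{i} B_{i,2}$, which completes the argument. (Note that for $i=r$ one has $c_r = 0$, so $B_{r,2}$ and $C_{r,2}$ are empty and the $C_{r,1}$ integers fill $B_{r,1}$ unconditionally, consistently with the general case.)
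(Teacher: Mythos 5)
Your proof is correct and takes essentially the same route as the paper: \cref{cor:BicontainsCi} together with the multiplicity-$rd$ condition determines the multiset of entries in each $B_{i,1}$ from the entries in $B_{i,2}$, and row semistandardness on the contiguous block $B_{i,1}$ then forces the arrangement. The paper's own proof is terser---it leaves the ``unique non-decreasing arrangement'' step implicit---but the underlying argument is the same, and your explicit bookkeeping of the multiplicities ($rd$ for $k \in C_{i,1}$, $rd - m_k$ for $k \in C_{i,2}$) is a faithful elaboration of it.
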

\begin{proof}
	Let \(1 \le i \le r\). From \cref{cor:BicontainsCi} the subtableaux of $\Gamma_1$ and $\Gamma_2$ containing boxes $B_i$ are filled with each integer in $C_i$ with multiplicity $rd$. Since $\Gamma_1$ and $\Gamma_2$ contain identical entries in $B_{i,2}$, the boxes $B_{i,1}$ in $\Gamma_1$ and $\Gamma_2$ contain identical entries. 
	\end{proof}
 
	\begin{lemma}\label{lemma:Tinv-eq-condition}
		Let \(\Gamma\) be a Young tableau of shape $\lambda_d$. Then $\Gamma \in \mathtt{ST}(\lambda_d)$ if and only if for all \(1 \leq i \leq r\) we have
		\begin{enumerate}[label=$(\arabic*)$,itemsep=0pt]
			\item Subtableau of \(\Gamma\) containing boxes \(B_i\) is row semistandard skew tableau.
			\item Boxes \(B_i\) contain integers in \(C_i\) each with multiplicity \(rd\).
			\item \(\Gamma(i,1) \ge l_{i-1}\).
		\end{enumerate}
	\end{lemma}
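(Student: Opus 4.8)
The plan is to prove the two implications separately, with the forward direction $(\Rightarrow)$ being essentially a bookkeeping restatement of earlier results and the reverse direction $(\Leftarrow)$ carrying all the content. For $(\Rightarrow)$, assume $\Gamma\in\mathtt{ST}(\lambda_d)$, i.e. \labelcref{Tinvariance:condition1}--\labelcref{Tinvariance:condition3} hold. Condition $(3)$ of the lemma is immediate from \labelcref{Tinvariance:condition3}. Condition $(1)$ follows because each of the two rows of the skew shape $B_i$ is a contiguous segment of a row of $\Gamma$, hence non-decreasing by row-semistandardness. Condition $(2)$ is exactly \cref{cor:BicontainsCi} together with \labelcref{Tinvariance:condition2}: the corollary identifies $B_i$ as the set of boxes holding $C_i$-entries, and \labelcref{Tinvariance:condition2} supplies the multiplicity $rd$.

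The substance is $(\Leftarrow)$. Assume $(1)$--$(3)$. First I would record the decomposition of the rows: by $(2)$ the boxes of row $i$ split as $B_{i-1,2}\sqcup B_{i,1}$, carrying entries from $C_{i-1}$ (all $\le a_{i-1}$) on the left and from $C_i$ (all $\ge a_{i-1}+1$) on the right. Since $\{1,\dots,n\}=\bigsqcup_i C_i$ and the cardinalities match ($|B_i|=rd\,|C_i|$), condition $(2)$ forces each integer to occur exactly $rd$ times, giving \labelcref{Tinvariance:condition2}. Each of the two segments of row $i$ is non-decreasing by $(1)$, and at the junction the left block is $\le a_{i-1}$ while the right block is $\ge a_{i-1}+1$; hence the full row $i$ is non-decreasing. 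Combined with the upper bound $\le a_i$ coming from the content and the lower bound $\Gamma(i,1)\ge l_{i-1}$ from $(3)$, this yields both row-semistandardness and \labelcref{Tinvariance:condition3}.

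It remains to prove column-standardness, i.e. $\Gamma(i,j)<\Gamma(i+1,j)$ for all $i,j$; this is the heart of the argument. Comparing the blocks in which $(i,j)$ and $(i+1,j)$ sit, the value ranges are separated (by $a_{i-1}$ or $a_i$) and the strict inequality is automatic, \emph{except} in the overlap columns $dc_{i-1}<j\le dc_i$ --- which occur only when $c_{i-1}<c_i$ --- where both entries lie in $C_i$. This overlap case is the main obstacle. Here I would first use $(3)$ on row $i+1$ (whose minimum sits in column $1$, the row being non-decreasing) to conclude that every entry of $B_{i,2}$ is $\ge l_i$.

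Then, arguing by contradiction, suppose $\Gamma(i,j)\ge\Gamma(i+1,j)=v$ for some such $j$, so $v\ge l_i$. Using that row $i$ is non-decreasing one bounds the number of entries $<v$ in $B_{i,1}$ above by $j-dc_{i-1}-1$, and using that $B_{i,2}$ takes values only in $\{l_i,\dots,a_i\}$ one bounds the number of entries $\le v-1$ in $B_{i,2}$ above by $rd(v-l_i)$. Feeding these into the content identity (the entries $\le v-1$ in $B_i$ number $rd(v-1-a_{i-1})$) and using $j\le dc_i$ collapses everything to $rd(l_i-1-a_{i-1})\le d(c_i-c_{i-1})-1<dr$, whence $l_i\le a_{i-1}+1$. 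This contradicts $l_i\ge a_{i-1}+2$ from \eqref{sequence}. Thus the two inequalities $0<c_i<r$ (\cref{lemma:property_c}) and $a_{i-1}+2\le l_i$ are precisely what force column-standardness in the delicate overlap region, completing the proof.
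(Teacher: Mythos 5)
Your proof is correct, and its skeleton agrees with the paper's: the forward direction is the same bookkeeping (via \cref{cor:BicontainsCi}), and in the reverse direction you obtain $T$-invariance, row-semistandardness (splitting each row as $B_{i-1,2}\sqcup B_{i,1}$) and the entry bounds exactly as the paper does. Where you genuinely diverge is the crux, column-standardness. The paper never isolates the overlap columns; it first proves two exact-value statements, $\Gamma(i,nd)=a_i$ and $\Gamma(i,rd)=a_{i-1}+1$ (its Steps 2--3), the latter being the positive form of your pigeonhole: the $rd$ copies of $a_{i-1}+1$ cannot lie in $B_{i,2}$ (entries of row $i+1$ are $\ge l_i\ge a_{i-1}+2$), hence they fill the first $rd$ boxes of the non-decreasing segment $B_{i,1}$, namely columns $dc_{i-1}+1,\ldots,dc_{i-1}+rd$, a range containing column $rd$ because $dc_{i-1}<rd$ (\cref{lemma:property_c}). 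Column-standardness then splits at column $rd$: for $j\le rd$ one has $\Gamma(i,j)\le a_{i-1}+1<l_i\le\Gamma(i+1,j)$, and for $j>rd$ one has $\Gamma(i,j)\le a_i<\Gamma(i+1,j)$. You instead split by block membership, observe that only the overlap $dc_{i-1}<j\le dc_i$ is nontrivial, and dispose of it by a counting contradiction; your inequalities check out --- $rd(v-1-a_{i-1})\le(j-dc_{i-1}-1)+rd(v-l_i)$ yields $rd(l_i-1-a_{i-1})\le d(c_i-c_{i-1})-1<dr$, forcing $l_i\le a_{i-1}+1$ --- and they rest on precisely the same two facts the paper uses, $0<c_i<r$ and $l_i\ge a_{i-1}+2$ from \eqref{sequence}. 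The trade-off: your route makes transparent exactly where these numerical hypotheses enter, but it does not produce the exact values at columns $rd$ and $nd$; the paper's Step 3 is recorded separately as \cref{cor:colrd} and is reused in \cref{def:split:P1P2z} to show that $\Gamma^{(1)}$ satisfies \ref{property:P1} and $\Gamma^{(2)}$ satisfies \ref{property:P2}, so if your proof were substituted for the paper's, that corollary would need its own short argument (one readily extracted from your ingredients: all $rd$ copies of $a_{i-1}+1$ must sit at the start of $B_{i,1}$).
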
	
	\begin{proof}
		See \cref{proof:Tinv_eq_condition}
	\end{proof}
	\begin{corollary}\label{cor:colrd}
		Let	\(\Gamma \in \mathtt{ST}(\lambda_{d})\). Then \(\Gamma(i,rd) = a_{i-1}+1\) for all $1 \leq i \leq r$.
	\end{corollary}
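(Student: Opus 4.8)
The plan is to locate, inside row $i$, the exact columns occupied by the smallest admissible entry $a_{i-1}+1$, and then verify that column $rd$ lands inside that block. First I would read off the anatomy of row $i$ from the partition of the boxes. Row $i$ is the disjoint union $B_{i-1,2}\sqcup B_{i,1}$, where $B_{i-1,2}=\{i\}\times[1,dc_{i-1}]$ and $B_{i,1}=\{i\}\times[dc_{i-1}+1,dn]$. By \cref{cor:BicontainsCi} the boxes of $B_{i,1}\subseteq B_i$ carry integers from $C_i=\{a_{i-1}+1,\ldots,a_i\}$, whereas $B_{i-1,2}$ carries integers from $C_{i-1,2}$, all of which are $\le a_{i-1}$. (For $i=1$ the block $B_{0,2}$ is empty since $c_0=0$, so row $1$ is entirely $B_{1,1}$ and carries $C_1=\{1,\ldots,a_1\}$; the general argument specializes correctly.)

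Next I would pin down the multiplicity and placement of the minimum $a_{i-1}+1=\min C_i$. The key observation is that $a_{i-1}+1\notin C_{i,2}$: by \ref{sequence} we have $l_i\ge a_{i-1}+2>a_{i-1}+1$, hence $a_{i-1}+1$ lies in $C_{i,1}$. Consequently $a_{i-1}+1$ never occurs in $B_{i,2}$; for $1\le i\le r-1$ this is the \emph{moreover} part of \cref{cor:BicontainsCi} (since $B_{i,2}$ holds precisely $C_{i,2}$), and for $i=r$ the block $B_{r,2}$ is empty so the claim is immediate. Thus every occurrence of $a_{i-1}+1$ lies in $B_{i,1}$, that is, in row $i$. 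Since \ref{Tinvariance:condition2} forces $a_{i-1}+1$ to occur exactly $rd$ times in $\Gamma$, it occurs exactly $rd$ times in row $i$.

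Finally I would invoke row semistandardness. As the entries of $B_{i,1}$ are non-decreasing and bounded below by $a_{i-1}+1$, the $rd$ copies of this minimum must occupy the initial segment of $B_{i,1}$, namely columns $dc_{i-1}+1,\ldots,dc_{i-1}+rd$; so $\Gamma(i,j)=a_{i-1}+1$ precisely for $j$ in this interval. It then remains to check that $rd$ lies in the interval. The upper bound $rd\le dc_{i-1}+rd$ is trivial, and the lower bound $dc_{i-1}+1\le rd$ follows from $c_{i-1}\le r-1$, which is \cref{lemma:property_c} when $1\le i-1\le r-1$ and is $c_0=0$ when $i=1$; indeed then $dc_{i-1}\le d(r-1)\le rd-1$ since $d\ge 1$. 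Hence $\Gamma(i,rd)=a_{i-1}+1$.

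The only delicate point, and the step I expect to demand the most care, is this boundary bookkeeping: confirming that column $rd$ has already advanced past $B_{i-1,2}$ into $B_{i,1}$ yet has not run beyond the block of $a_{i-1}+1$'s. Both of these reduce to the single strict estimate $c_{i-1}<r$ (together with the degenerate value $c_0=0$), so once \cref{lemma:property_c} is in hand the argument closes cleanly; everything else is a direct application of \cref{cor:BicontainsCi} and the non-decreasing property of rows.
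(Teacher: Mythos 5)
Your proof is correct and takes essentially the same route as the paper: the paper disposes of this corollary by citing Step~3 of the proof of \cref{lemma:Tinv-eq-condition}, whose argument is exactly yours --- the entry $a_{i-1}+1$ occurs with multiplicity $rd$, it cannot occur in $B_{i,2}$ (there entries are at least $l_i \ge a_{i-1}+2$), hence all $rd$ copies sit at the start of $B_{i,1}$ by row semistandardness, and $dc_{i-1}<rd$ (from \cref{lemma:property_c}) places column $rd$ inside that block. The only cosmetic difference is that you route the ``no copies in $B_{i,2}$'' step through the moreover part of \cref{cor:BicontainsCi}, while the paper derives it directly from conditions (1)--(3) of the lemma; the logical content is identical.
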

	\begin{proof}
		Follows from \labelcref{step:colrd} of proof of \cref{lemma:Tinv-eq-condition}.	
	\end{proof}

\subsection{Lattice points and \(T\)-invariants}\label{lattice_bij}
Recall that $C=\sqcup_{i=1}^{r} C_{i,2}$. Let $\lambda=m\omega_r$. In this section we associate an integer point in $\mathbb{Z}^{|C|}$ to a tableau $\Gamma$ of shape $\lambda$ (need not be semistandard) and we define a set of integer points $\mathtt{P}_d$. The main takeaway of this section is \cref{lemma:bij} in which we prove that there is a bijection between $\mathtt{P}_d$ and $\mathtt{ST}(\lambda_d)$. 
	\begin{definition}
		Let \(\Gamma\) be a Young tableau of shape \(\lambda\) (need not be semistandard). For any $j \in C,$ there exists unique $1 \leq i \leq r-1$ such that $j\in C_{i,2},$ we define \( \mathtt{z}_j(\Gamma) \) to be the number of times \( j \) appears in row \({i+1}\) of \(\Gamma \). We set \( \mathtt{z}(\Gamma) := \big(\mathtt{z}_j(\Gamma)\big)_{j \in C} \in \mathbb{Z}_{\ge 0}^{|C|} \). 
		Denote the restriction of \(\mathtt{z}\) to \(\mathtt{ST}(\lambda_d)\) by \(\bar{\mathtt{z}}_d\).
		\label{def:z}
	\end{definition}
	
	\begin{remark}
		Let \(\Gamma_1\) and \(\Gamma_2\) be two Young tableaux of shape $m_1\omega_r$ and $m_2\omega_r$ respectively (need not be semistandard). Then \(\mathtt{z}(\Gamma_1\Gamma_2)=\mathtt{z}(\Gamma_1)+\mathtt{z}(\Gamma_2)\).
		\label{def:z:adds}
	\end{remark}

	\begin{definition}\label{def:polytope}
	We define a set of integer points $$\mathtt{P}_d:=\Big\{\bar{z}=(z_{j})_{_{j \in C}} \big{|} \text{ for } 1 \leq i \leq r-1, \sum_{j \in C_{i,2}} z_j = dc_i\Big\} \subseteq \mathbb{Z}^{|C|}_{\ge 0}.$$
	\end{definition}
 
 \begin{lemma}
 	If \(\Gamma \in \mathtt{ST}(\lambda_d)\) then \(\mathtt{z}(\Gamma) \in \mathtt{P}_{d}\).
 	\label{lemma:zinPd}
 \end{lemma}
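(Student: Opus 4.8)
The plan is to show that $\mathtt{z}(\Gamma)$ satisfies the defining condition of $\mathtt{P}_d$, namely that for each $1 \le i \le r-1$ we have $\sum_{j \in C_{i,2}} \mathtt{z}_j(\Gamma) = dc_i$, and that each coordinate is a non-negative integer. The non-negativity is immediate from \cref{def:z}, since $\mathtt{z}_j(\Gamma)$ counts occurrences of $j$ in a row and is therefore a non-negative integer; so the content is entirely in verifying the $r-1$ linear equations.

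First I would fix $1 \le i \le r-1$ and unwind the definitions. By \cref{def:z}, for $j \in C_{i,2}$ the quantity $\mathtt{z}_j(\Gamma)$ is the number of times $j$ appears in row $i+1$ of $\Gamma$. Hence $\sum_{j \in C_{i,2}} \mathtt{z}_j(\Gamma)$ is the total number of boxes in row $i+1$ whose entry lies in $C_{i,2} = \{l_i, \ldots, a_i\}$. The goal is thus to count exactly these boxes and show the count equals $dc_i$.

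The key step is to invoke \cref{cor:BicontainsCi}, which tells us precisely where the integers of $C_{i,2}$ live in a tableau $\Gamma \in \mathtt{ST}(\lambda_d)$: the boxes of $\Gamma$ containing integers in $C_{i,2}$ are exactly $B_{i,2} = \{i+1\} \times [1, dc_i]$. This identifies the set of boxes in row $i+1$ with entries in $C_{i,2}$ as the interval $[1, dc_i]$ of columns, which has cardinality $dc_i$. Since every box in $B_{i,2}$ contributes exactly one entry from $C_{i,2}$, the total count $\sum_{j \in C_{i,2}} \mathtt{z}_j(\Gamma)$ equals $|B_{i,2}| = dc_i$, as required.

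I do not expect a genuine obstacle here, as the result is essentially a bookkeeping consequence of \cref{cor:BicontainsCi}; the only point needing minor care is the bridge between the definition of $\mathtt{z}_j$ (a per-value count in row $i+1$) and the box-location statement of \cref{cor:BicontainsCi} (a per-location statement about $B_{i,2}$). These two descriptions count the same set from opposite directions, and making this correspondence explicit is all that the proof requires. I would conclude by noting that since all $r-1$ equations hold and all coordinates are non-negative integers, $\mathtt{z}(\Gamma) \in \mathtt{P}_d$.
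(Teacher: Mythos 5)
Your overall counting strategy matches the paper's, and the reduction you make---that $\sum_{j \in C_{i,2}} \mathtt{z}_j(\Gamma)$ is the number of boxes in row $i+1$ whose entry lies in $C_{i,2}$---is fine. The gap is in your key step: you invoke \cref{cor:BicontainsCi} as the set equality ``the boxes of $\Gamma$ containing integers in $C_{i,2}$ are \emph{exactly} $B_{i,2}$''. Read as a statement about all of $\Gamma$, that equality is false. Indeed, for $j \in C_{i,2}$, \labelcref{Tinvariance:condition2} forces $j$ to occur $rd$ times in $\Gamma$, while $|B_{i,2}| = dc_i < rd$ by \cref{lemma:property_c}; so each such $j$ also occurs ($rd - \mathtt{z}_j(\Gamma) > 0$ times) in row $i$, inside $B_{i,1}$. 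The corollary's wording invites your reading, but its proof only establishes the containment: the entries of the boxes in $B_{i,2}$ lie in $C_{i,2}$. With only that containment in hand, your count is a priori only a lower bound $\sum_{j \in C_{i,2}} \mathtt{z}_j(\Gamma) \ge |B_{i,2}|$ is not even right---rather, you simply have no control on boxes of row $i+1$ outside $B_{i,2}$: nothing you cite rules out such a box carrying an entry from $C_{i,2}$, which would make the sum exceed $dc_i$.

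The missing idea is exactly what the paper's proof supplies for that complementary direction. Row $i+1$ decomposes as $B_{i,2} \sqcup B_{i+1,1}$; by the first part of \cref{cor:BicontainsCi}, the entries of $B_{i+1,1}$ lie in $C_{i+1}$; and $C_{i+1} \cap C_{i,2} = \emptyset$. Hence no box of $B_{i+1,1}$ contains an integer of $C_{i,2}$, so the boxes of row $i+1$ with entries in $C_{i,2}$ are precisely $B_{i,2}$, giving $\sum_{j \in C_{i,2}} \mathtt{z}_j(\Gamma) = |B_{i,2}| = dc_i$. This disjointness argument is a one-line fix, but it is genuinely needed: as written, your proof rests on a claim that is stronger than what the cited corollary proves and is in fact untrue.
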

 \begin{proof}
 	Let \(1 \le i \le r-1\). We prove that \(\sum_{j \in C_{i,2}} \mathtt{z}_{j}(\Gamma)= dc_i\). Observe that boxes in row \(i+1\) are \(B_{i,2} \sqcup B_{i+1,1}\). From \cref{cor:BicontainsCi} we have \(B_{i,2}\) contains integers from \(C_{i,2}\) and boxes \(B_{i+1,1}\) contains integers from \(C_{i+1}\). Hence \(C_{i}\) and \(C_{i+1}\) are disjoint implies \(\sum_{j \in C_{i,2}} \mathtt{z}_{j}(\Gamma) = |B_{i,2}| = dc_{i}\). Hence \(\mathtt{z}(\Gamma)  \in \mathtt{P}_{d} \).
\end{proof}
\begin{lemma} 
		The map \(\bar{\mathtt{z}}_d\) is a bijection between \(\mathtt{P}_d\) and \(\mathtt{ST}(\lambda_d)\).
		\label{lemma:bij}
\end{lemma}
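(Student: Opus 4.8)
The plan is to show that $\bar{\mathtt{z}}_d\colon \mathtt{ST}(\lambda_d) \to \mathtt{P}_d$, which is well-defined by \cref{lemma:zinPd}, is both injective and surjective. The one structural fact I would use throughout is \cref{cor:BicontainsCi}: for $1 \le i \le r-1$ every occurrence of an integer $j \in C_{i,2}$ in a tableau $\Gamma \in \mathtt{ST}(\lambda_d)$ lies in the single-row block $B_{i,2}=\{i+1\}\times[1,dc_i]$, so $\mathtt{z}_j(\Gamma)$ is exactly the multiplicity of $j$ inside $B_{i,2}$.

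For injectivity, suppose $\Gamma_1,\Gamma_2 \in \mathtt{ST}(\lambda_d)$ satisfy $\mathtt{z}(\Gamma_1)=\mathtt{z}(\Gamma_2)$. Fix $1 \le i \le r-1$. The block $B_{i,2}$ is a contiguous segment of row $i+1$ filled only with integers of $C_{i,2}$, and since each $\Gamma_k$ is row semistandard these entries appear in non-decreasing order; hence the content of $B_{i,2}$ is determined by the multiplicities $(\mathtt{z}_j(\Gamma_k))_{j \in C_{i,2}}$. As these agree, $\Gamma_1$ and $\Gamma_2$ carry identical entries in $B_{i,2}$ for every $i$ (the case $i=r$ being vacuous since $B_{r,2}$ is empty). \cref{cor:identicalBi2} then forces $\Gamma_1=\Gamma_2$.

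Surjectivity is the substantive part, and I would prove it by an explicit construction checked against \cref{lemma:Tinv-eq-condition}. Given $\bar z=(z_j)_{j \in C}\in \mathtt{P}_d$, fill each block $B_{i,2}$ ($1 \le i \le r-1$) with $z_j$ copies of each $j \in C_{i,2}$ in non-decreasing order, which is legitimate because $\sum_{j \in C_{i,2}} z_j = dc_i = |B_{i,2}|$ by \cref{def:polytope}; then fill each block $B_{i,1}$ ($1 \le i \le r$) in non-decreasing order with every $j \in C_{i,1}$ repeated $rd$ times and every $j \in C_{i,2}$ repeated $rd-z_j$ times. A weight count using the identity $n-c_{i-1}+c_i = r(a_i-a_{i-1})$ shows the prescribed content of $B_{i,1}$ has exactly $|B_{i,1}|$ boxes, so $B_i=B_{i,1}\sqcup B_{i,2}$ contains each integer of $C_i$ with multiplicity $rd$. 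It then remains to verify the three conditions of \cref{lemma:Tinv-eq-condition}: condition $(2)$ holds by design; condition $(1)$ holds since each of $B_{i,1},B_{i,2}$ is placed in non-decreasing order within its own row; and for condition $(3)$ the entry $\Gamma(i,1)$ with $i \ge 2$ sits in $B_{i-1,2}$ (as $dc_{i-1}\ge 1$) and is thus at least $l_{i-1}$, while $\Gamma(1,1)\ge 1=l_0$ trivially. Finally $\mathtt{z}(\Gamma)=\bar z$, because the only copies of $j \in C_{i,2}$ in row $i+1$ are the $z_j$ placed in $B_{i,2}$.

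The hard part is making this construction well-posed: the step that could fail is the requirement $rd-z_j \ge 0$ for the multiplicities filling $B_{i,1}$. This is exactly where \cref{lemma:property_c} enters: since $c_i<r$ we get $z_j \le \sum_{j' \in C_{i,2}} z_{j'} = dc_i < dr = rd$, so the counts are genuinely non-negative; the same lemma ($c_{i-1}>0$, hence $dc_{i-1}\ge 1$) guarantees the boundary position used in condition $(3)$. I expect the only real care to lie in the bookkeeping of the block sizes and multiplicities, all of which reduce to the identity $n-c_{i-1}+c_i = r(a_i-a_{i-1})$.
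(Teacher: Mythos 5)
Your proof is correct and follows essentially the same route as the paper: the identical construction of the tableau $\Gamma_{\bar z}$ from $\bar z \in \mathtt{P}_d$, verified via \cref{lemma:Tinv-eq-condition}, together with \cref{cor:identicalBi2} to pin down uniqueness. The only differences are organizational and cosmetic: you prove injectivity and surjectivity of $\bar{\mathtt{z}}_d$ directly rather than exhibiting $\bar z \mapsto \Gamma_{\bar z}$ as a two-sided inverse, and you make explicit two counting checks (the identity $n-c_{i-1}+c_i=r(a_i-a_{i-1})$ and the non-negativity $rd-z_j\ge 0$ via \cref{lemma:property_c}) that the paper's proof leaves implicit.
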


\begin{proof}
Let \(\bar{z} \in \mathtt{P}_d \). We construct a semistandard tableau \( \Gamma_{\bar{z}}\) and show that \(\Gamma_{\bar{z}} \in \mathtt{ST}(\lambda_d)\). Let $1 \leq i \leq r$.
We start with Young diagram \(\lambda_d\) and fill the boxes \(B_{i} ( = B_{i,1}\sqcup B_{i,2})\) with the entries from \(C_{i}( = C_{i,1}\sqcup C_{i,2})\) each with multiplicity \(rd\) in non-decreasing order in each row (This is possible because for \(1 \leq i \leq r\) we have $|B_i| = rd(a_i-a_{i-1})$ and \(|C_{i}| = a_{i}-a_{i-1}\)). We do this as follows.
\begin{enumerate}
	\item Fill the boxes in \(B_{i,2}\) with the integers \(j \in C_{i,2}\) each with multiplicity \(z_j\) in non-decreasing order. This is possible because $\sum_{j \in C_{i,2}} z_j =dc_i$ and number of boxes in $B_{i,2}$ is also $dc_i$. 
	\item Fill the boxes \(B_{i,1}\) with the integers \(j \in C_{i,1} \) each with multiplicity \( rd \) and the integers \( j \in C_{i,2} \) each with multiplicity \( rd-z_j \) in non-decreasing order.
\end{enumerate}
Then \(\Gamma_{\bar{z}}\) has the following properties.
For all \(1 \leq i \leq r\) 
\begin{enumerate}[label=(\arabic*),ref=\arabic*.,itemsep=0pt]
	\item Subtableau of \(\Gamma_{\bar{z}}\) containing boxes \(B_i\) is row semistandard skew tableau.
	\item Boxes \(B_i\) contain integers from \(C_i\) each with multiplicity \(rd\).
	\item \(\Gamma_{\bar{z}}(i,1) \ge l_{i-1}\).
\end{enumerate}
Hence from \cref{lemma:Tinv-eq-condition} we have \(\Gamma_{\bar{z}} \in \mathtt{ST}(\lambda_d)\).	

Now we prove that the map $\bar{z} \mapsto \Gamma_{\bar{z}}$ is a bijection and $\bar{\mathtt{z}}_d$ is the inverse. If \(\bar{z},\bar{z}' \in \mathtt{P}_d\) and \(\bar{z} \ne \bar{z}'\),\hide{Then $z_j \neq z'_j$ for some $j \in C_{i,2}$ for some $i \in [1,r-1]$,} then from construction \(\Gamma_{\bar{z}} \ne \Gamma_{\bar{z}'}\) follows. This proves injectivity.  
For surjectivity, let \(\Gamma' \in \mathtt{ST}(\lambda_d)\). Then \(\mathtt{z}(\Gamma') \in \mathtt{P}_d\) (From \cref{lemma:zinPd}). We claim that \(\Gamma_{\mathtt{z}(\Gamma')} = \Gamma'\). From construction of \(\Gamma_{\mathtt{z}(\Gamma')}\) we have, for all \(1 \leq i \leq r\) the boxes \(B_{i,2}\) in \(\Gamma_{\mathtt{z}(\Gamma')}\) contain integers \(j \in C_{i,2}\) each with multiplicity \(\mathtt{z}_j(\Gamma')\) in non-decreasing order. 
Note that by the definition of \(\mathtt{z}_j(\Gamma')\) and the assumption that \(\Gamma' \in \mathtt{ST}(\lambda_d)\), we have for all \(1 \leq i \leq r\) boxes \(B_{i,2}\) in \(\Gamma'\) contain integers \(j \in C_{i,2}\) each with multiplicity \(\mathtt{z}_j(\Gamma')\) in non-decreasing order. Hence the entries in boxes \(B_{i,2}\) in $\Gamma'$ and $\Gamma_{\mathtt{z}(\Gamma')}$ are identical. Therefore from \cref{cor:identicalBi2} we have \(\Gamma'=\Gamma_{\mathtt{z}(\Gamma')}\).
\end{proof}	
	
\subsection{Splitting \(T\) invariant tableaux}\label{splitting}
	In this section we split a Young tableau in $\mathtt{ST}(\lambda_d)$ into two subtableaux (\cref{def:split}) and we study image of $\mathtt{z}$ on each subtableau.
	
	Let \(\Gamma\) be a tableau of shape \(m\omega_r\) (need not be semistandard). We denote the following two predicates by P$1$ and P$2$ .
			 
\begin{enumerate}[label=P\arabic*$(\Gamma)$:, ref=P\arabic*,leftmargin=50pt,itemsep=0pt]
			\item \label{property:P1} For \( 1 \leq i \leq r \) and $1 \leq j \leq m$, \(\Gamma(i,j) \in \{l_{i-1},\ldots,a_{i-1}+1\}\).
			\item \label{property:P2} For \( 1 \leq i \leq r \) and $1 \leq j \leq m$, \(\Gamma(i,j) \in \{a_{i-1}+1,\ldots,a_i\}\).
		\end{enumerate}
An arbitrary $\Gamma$ may satisfy \ref{property:P1} or \ref{property:P2} or both or none.
	\begin{remark} We have following remarks.
		\begin{enumerate}[label={\thetheorem\alph*}., ref={\thetheorem\alph*}, leftmargin=40pt]
			\item \label{prop-remark:mult}
				If \(\Gamma\) and \(\Gamma'\) are two tableaux of shape $m_1\omega_r$ and $m_2\omega_r$ respectively satisfying \ref{property:P1}, then \(\Gamma\Gamma'\) also satisfies \ref{property:P1}. Similarly, if \(\Gamma\) and \(\Gamma'\) both satisfy \ref{property:P2}, then product \(\Gamma\Gamma'\) also satisfies \ref{property:P2}.
			\item \label{prop-remark:sst}
				Let \(\Gamma_1\) be a semistandard tableau of shape $m_1\omega_r$ and \(\Gamma_2\) be a semistandard tableau of shape $m_2\omega_r$. If $\Gamma_1$ and $\Gamma_2$ satisfy~\ref{property:P1} and~\ref{property:P2} respectively, then \(\Gamma_1\Gamma_2\) is semistandard. This follows from $\Gamma_1(i,m_1) \leq a_{i-1}+1$ and \(\Gamma_2(i,1) \geq a_{i-1}+1\) for all $1 \leq i \leq r$.
		\end{enumerate}
	\end{remark}
	\begin{lemma}
		Let \(\Gamma\) be a tableau of shape $m\omega_r$ which satisfies \ref{property:P2}. Then \(\mathtt{z}(\Gamma) = 0\).
		\label{lemmaP2:z0}
	\end{lemma}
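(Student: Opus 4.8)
The plan is to reduce to a single coordinate and then exploit a numerical incompatibility between the range of the index $j$ and the entries allowed in a given row by \ref{property:P2}. Since $\mathtt{z}(\Gamma) = \big(\mathtt{z}_j(\Gamma)\big)_{j \in C}$, it suffices to prove that $\mathtt{z}_j(\Gamma) = 0$ for each $j \in C$. So I would fix $j \in C$ and let $i$ be the unique index with $1 \le i \le r-1$ and $j \in C_{i,2}$, as guaranteed by \cref{def:z}; by definition $\mathtt{z}_j(\Gamma)$ is then the number of boxes in row $i+1$ of $\Gamma$ that contain the integer $j$.

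The heart of the argument is the observation that $j$ simply cannot appear in row $i+1$. On one hand, $j \in C_{i,2} = \{l_i, \ldots, a_i\}$ forces $j \le a_i$. On the other hand, since $i \le r-1$ the row $i+1$ is a genuine row of the shape $m\omega_r$, and \ref{property:P2} applied to this row yields $\Gamma(i+1,j') \in \{a_i+1, \ldots, a_{i+1}\}$ for every $1 \le j' \le m$; in particular every entry of row $i+1$ is at least $a_i+1$, hence strictly larger than $j$. Therefore $j$ occurs zero times in row $i+1$, that is, $\mathtt{z}_j(\Gamma) = 0$.

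As $j \in C$ was arbitrary, this gives $\mathtt{z}(\Gamma) = 0$. I do not anticipate any real obstacle: the whole content is the disjointness of the interval $\{l_i, \ldots, a_i\}$ (where $j$ lives) from the interval $\{a_i+1, \ldots, a_{i+1}\}$ (to which the $(i+1)$-st row is confined by P2), which is immediate from the chain of inequalities recorded in \ref{sequence}. The only point worth stating carefully is the bookkeeping of indices, namely that the row relevant to $\mathtt{z}_j$ for $j \in C_{i,2}$ is row $i+1$, whose P2-range begins at $a_i+1$, exactly one above the top of $C_{i,2}$.
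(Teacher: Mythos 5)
Your proof is correct and follows essentially the same route as the paper: both apply \ref{property:P2} to row $i+1$ to place its entries in $\{a_i+1,\ldots,a_{i+1}\}$ and then use the disjointness of this interval from $C_{i,2}=\{l_i,\ldots,a_i\}$. The only cosmetic difference is that you phrase the disjointness as the inequality $j\le a_i<a_i+1$, whereas the paper states it as $C_{i,2}\cap\{a_i+1,\ldots,a_{i+1}\}=\emptyset$.
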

	\begin{proof}
		For \(1 \leq i \leq r-1\) and \(j \in C_{i,2}\), \(\mathtt{z}_j(\Gamma)\) is number of times \(j\) appear in row \(i+1\). Since \(\Gamma\) satisfies \ref{property:P2} we have \(\Gamma(i+1,j) \in \{a_{i}+1, \ldots, a_{i+1}\}\). The claim follows from \(C_{i,2} \cap \{a_{i}+1, \ldots, a_{i+1}\} = \emptyset\).
	\end{proof}
	\begin{definition}
		Let \({\Gamma} \in \mathtt{ST}(\lambda_d)\). We associate two tableaux \(\Gamma^{(1)}\) and \(\Gamma^{(2)}\) to \(\Gamma\) as follows:
		Define \(\Gamma^{(1)}\) by taking columns \(1,\ldots,rd\) of $\Gamma$ and \(\Gamma^{(2)}\) by taking columns \(rd+1,\ldots, nd	\) of \(\Gamma\). We have \(\Gamma = \Gamma^{(1)}\Gamma^{(2)}\) and the shape of \(\Gamma^{(1)}\) and \(\Gamma^{(2)}\) are \(rd\omega_r\) and \((n-r)d\omega_r\) respectively.
		\label{def:split}
	\end{definition}
	\begin{remark}
		Let \({\Gamma} \in \mathtt{ST}(\lambda_d)\). Then \(\Gamma^{(1)}\) satisfies \ref{property:P1} and \(\Gamma^{(2)}\) satisfies \ref{property:P2}. Hence \(\mathtt{z}(\Gamma^{(2)})= 0\) and \(\mathtt{z}(\Gamma) = \mathtt{z}(\Gamma^{(1)})\).
		\label{def:split:P1P2z}
	\end{remark}
	\begin{proof}		
	For $1 \leq i \leq r$ and $1 \leq j \leq rd$ we have $\Gamma^{(1)}(i,j) \in \{l_{i-1}, \ldots, a_{i-1}+1\}$ follows from $\Gamma^{(1)}$ is semistandard, $\Gamma^{(1)}(i,1)=\Gamma(i,1) \geq l_{i-1}$ and $\Gamma^{(1)}(i,rd)=\Gamma(i,rd) = a_{i-1}+1$ (see \cref{cor:colrd}). Hence \(\Gamma^{(1)}\) satisfies \ref{property:P1}. Similarly \(\Gamma^{(2)}\) satisfies \ref{property:P2} follows from $\Gamma^{(2)}$ is semistandard, $\Gamma^{(2)}(i,1) = \Gamma(i,rd+1) \geq \Gamma(i,rd) = a_{i-1}+1$ (see \cref{cor:colrd}) and $\Gamma^{(2)}(i,(n-r)d)=\Gamma(i,nd) = a_{i}$.
	
By \cref{lemmaP2:z0} \(\mathtt{z}(\Gamma^{(2)})= 0\). Hence by \cref{def:z:adds} \(\mathtt{z}(\Gamma) = \mathtt{z}(\Gamma^{(1)})\) .
	\end{proof}

\section{Ring of Invariants}\label{section4}

	In this section we prove the main theorem. We do this by proving algebra \(R\) is isomorphic to the homogeneous coordinate ring of product of projective spaces. In  \cref{sec:pluckrel} we prove that the straightening relations on $X^{v_{\underline{l}}}_w$ are binomials. Next in \cref{sec:proofmain} we prove the main theorem.
\subsection{Pl\"ucker relations}
\label{sec:pluckrel}
	We now prove that straightening relations on $X^{v_{\underline{l}}}_w$ are binomials.
 Recall that for $\alpha=(\alpha_1, \ldots, \alpha_r)\in I(r,n)$ the Pl\"ucker coordinate $p_{\alpha}$ is non-zero on $X^{v_{\underline{l}}}_w$ if and only if \(\alpha_i \in [l_{i-1},a_i] = [l_{i-1},a_{i-1}+1] \cup [a_{i-1}+1,a_i]\) for all $1 \leq i \leq r$. \hide{We note following two observations which will be used in the proofs of Lemma{}. For \( 1 \le i,j \le r \),
	\begin{enumerate}
		\item[(i)] \textcolor{red}{if} \(j<i\) then \([l_{j-1},a_{j-1}+1] \cap [l_{i-1},a_i] = \emptyset\)
		\item[(ii)] \textcolor{red}{if} \(i<j\) then \([a_{j-1}+1,a_j] \cap [l_{i-1},a_i] = \emptyset\).
	\end{enumerate}}
	\begin{lemma}
		Let \(\Gamma\) be a column standard Young tableau of shape $2\omega_r$ which satisfies \ref{property:P1} such that \( col_1(\Gamma)=\alpha = (\alpha_1, \ldots, \alpha_r) \in I(r,n)\) and \(col_2(\Gamma)=\beta = (\beta_1, \ldots, \beta_r) \in I(r,n)\). Let \(1 \leq k \leq r\) be a positive integer such that \(\alpha_1 \le \beta_1,\ldots,\alpha_{k-1} \le \beta_{k-1}, \alpha_{k} > \beta_{k}\). Then on the Richardson variety \(X^{v_{\underline{l}}}_{w}\), the Pl\"ucker relation $\mathcal{P}(\alpha, \beta, k)$ is 
		\[
		p_{\alpha}p_{\beta} - p_{\alpha^{\sigma}}p_{\beta^{\sigma}}=0
		\]
		where \( \sigma\) is the transposition \((\alpha_{k},\beta_{k}) \).
		
		Moreover, let $\sigma(\Gamma)$ be the column standard Young tableau of shape $2\omega_r$ with \(col_1(\sigma(\Gamma))=\alpha^{\sigma}\) and \(col_2(\sigma(\Gamma))=\beta^{\sigma}\). Then $f_{\Gamma}=f_{\sigma(\Gamma)}$. Also, \(\sigma(\Gamma)\) satisfies \ref{property:P1} and \(\mathtt{z}(\Gamma) = \mathtt{z}(\sigma(\Gamma))\).
		
		\label{lemma:firstpartrel}
	\end{lemma}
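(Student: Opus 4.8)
The plan is to convert ``$p_\gamma$ is nonzero on $X^{v_{\underline{l}}}_{w}$'' into a statement about how the entries of $\gamma$ distribute among the disjoint row-intervals $J_i := [l_{i-1},a_{i-1}+1]$, and then show this distribution is so rigid that only two shuffle terms of $\mathcal{P}(\alpha,\beta,k)$ can survive. First I would record two consequences of \ref{sequence}: the intervals $J_1,\ldots,J_r$ are pairwise disjoint and occur in increasing order (since $a_{i-1}+1<l_i$), and every entry of $\Gamma$ lies in $\bigcup_i J_i$ because $\Gamma$ satisfies \ref{property:P1}. Since all index tuples appearing in $\mathcal{P}(\alpha,\beta,k)$ are built from the entries of $\alpha$ and $\beta$, every one of their entries also lies in $\bigcup_i J_i$.

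The key step is a counting criterion for nonvanishing. For an increasing tuple $\gamma=(\gamma_1,\ldots,\gamma_r)$ with all entries in $\bigcup_i J_i$, I claim $p_\gamma\neq 0$ on $X^{v_{\underline{l}}}_{w}$ forces, for each $i$, the entry $\gamma_i$ to lie in $J_i\cup J_{i+1}$: indeed $\gamma_i\in[l_{i-1},a_i]$ together with $\gamma_i\in J_j$ pins $j$ to $\{i,i+1\}$, using $a_{j-1}+1<l_{i-1}$ for $j<i$ and $l_{j-1}>a_i$ for $j>i+1$ (both read off from \ref{sequence}). Writing $S_j(\gamma)$ for the number of entries of $\gamma$ lying in $J_1\cup\cdots\cup J_j$, this constraint yields $S_j(\gamma)\in\{j-1,j\}$ for every $j$. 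Now I apply this to $\gamma=\alpha^\sigma$ and $\gamma=\beta^\sigma$ simultaneously. Since the combined multiset $\alpha^\sigma\cup\beta^\sigma$ equals $\alpha\cup\beta$ independently of $\sigma$, and $\alpha\cup\beta$ has exactly two entries in each $J_i$ (namely $\alpha_i,\beta_i$), we get $S_j(\alpha^\sigma)+S_j(\beta^\sigma)=2j$; combined with each summand lying in $\{j-1,j\}$, this forces $S_j(\alpha^\sigma)=S_j(\beta^\sigma)=j$ for all $j$. Hence both $\alpha^\sigma$ and $\beta^\sigma$ contain exactly one entry in each interval $J_i$.

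From this rigidity I would read off the surviving shuffles. The multiset $M\cup N$ has one element in $J_i$ for $i\neq k$ (it is $\beta_i$ if $i<k$ and $\alpha_i$ if $i>k$) and two in $J_k$ (namely $\alpha_k,\beta_k$). Requiring $\alpha^\sigma=\{\alpha_1,\ldots,\alpha_{k-1}\}\cup\sigma(M)$ and $\beta^\sigma=\{\beta_{k+1},\ldots,\beta_r\}\cup\sigma(N)$ to have exactly one entry per interval forces $\sigma(N)\supseteq\{\beta_1,\ldots,\beta_{k-1}\}$, $\sigma(M)\supseteq\{\alpha_{k+1},\ldots,\alpha_r\}$, and the pair $\{\alpha_k,\beta_k\}$ to be split one-to-one between $\sigma(M)$ and $\sigma(N)$. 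This leaves precisely two shuffles: the identity, giving $p_\alpha p_\beta$, and the transposition $\sigma=(\alpha_k,\beta_k)$, giving $p_{\alpha^\sigma}p_{\beta^\sigma}$ where $\alpha^\sigma,\beta^\sigma$ are obtained from $\alpha,\beta$ by exchanging their $k$-th entries. All other terms vanish, which is the content of the relation.

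Finally I would settle the sign and the three ``moreover'' assertions. For the coefficient I check that no reordering is needed to make $\alpha^\sigma,\beta^\sigma$ increasing, because $\alpha_{k-1}\le\beta_{k-1}<\beta_k<\alpha_k<\beta_{k+1},\alpha_{k+1}$ by \ref{sequence}; so the transposition contributes with sign $-1$ against the $+1$ of the identity, yielding $p_\alpha p_\beta-p_{\alpha^\sigma}p_{\beta^\sigma}=0$ and hence $f_\Gamma=f_{\sigma(\Gamma)}$. That $\sigma(\Gamma)$ satisfies \ref{property:P1} is immediate: having one entry per $J_i$ and being increasing forces the $i$-th entry into $J_i=[l_{i-1},a_{i-1}+1]$. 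Lastly, $\mathtt{z}(\Gamma)=\mathtt{z}(\sigma(\Gamma))$ follows because the two tableaux carry the same multiset of entries in every row: rows $i\neq k$ are unchanged, and in row $k$ the pair $\{\alpha_k,\beta_k\}$ is merely swapped between the two columns, so each $\mathtt{z}_j$, which depends only on the row multisets, is preserved. The main obstacle is the rigidity argument of the middle two paragraphs, namely isolating the interval decomposition $J_i$ and the cumulative-count identity that kills every shuffle but two; the sign bookkeeping is routine once the non-reordering observation is in place.
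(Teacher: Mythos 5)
Your proof is correct, and it rests on the same two ingredients as the paper's: the nonvanishing criterion $\gamma_i \in [l_{i-1},a_i]$ on $X^{v_{\underline{l}}}_{w}$ and the disjointness and ordering of the intervals $J_i=[l_{i-1},a_{i-1}+1]$ coming from \ref{sequence}. Where you genuinely differ is the mechanism for killing the extraneous shuffle terms. The paper argues locally: any shuffle other than the identity and the transposition places either some $\beta_j$ (with $j\le k-1$) or both of $\alpha_k,\beta_k$ into $\alpha^{\sigma}$; a short positional argument then shows that entry sits at a slot $i$ strictly larger than its home index $j$, and $J_j\cap[l_{i-1},a_i]=\emptyset$ forces $p_{\alpha^{\sigma}}=0$. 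You argue globally: from $\gamma_i\in J_i\cup J_{i+1}$ you get $S_j(\gamma)\in\{j-1,j\}$, and the pairing identity $S_j(\alpha^{\sigma})+S_j(\beta^{\sigma})=2j$ then forces each of $\alpha^{\sigma},\beta^{\sigma}$ to carry exactly one entry per interval, after which the enumeration of admissible shuffles is immediate. Your route proves a slightly stronger structural invariant (one entry per $J_i$), which hands you the \ref{property:P1} claim for $\sigma(\Gamma)$ for free, and your sign bookkeeping is more explicit than the paper's, which states the two-term relation without discussing the coefficient; the paper's route is shorter and needs only the special cases $j<i$ of the disjointness. The closing claims are handled identically in both arguments --- in particular both use that $\mathtt{z}$ depends only on the multiset of entries in each row. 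One shared elision worth noting: shuffles for which $\alpha^{\sigma}$ has a repeated entry (possible when $\alpha_j=\beta_j$ for some $j<k$) are dismissed in both proofs only implicitly, via the convention that such terms vanish identically.
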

	\begin{proof}
		We will prove that in the Pl\"ucker relation \(\mathcal{P}(\alpha,\beta,k) \), the monomial \( p_{\alpha^{\sigma}}p_{\beta^{\sigma}}\) is non-zero on the Richardson variety $X^{v_{\underline{l}}}_w$ if and only if \( \sigma =id \) or \(\sigma = (\alpha_{k},\beta_{k})\). Observe that if $\sigma$ is other than $id$ and the transposition $(\alpha_k, \beta_k)$ then either (a) \(\beta_j\in \alpha^{\sigma}\) for some \(1 \le j \le k-1\) or (b) \(\alpha_{k},\beta_{k} \in \alpha^{\sigma}\). 
	
		(a) Let \(\sigma\) be such that \(\beta_j\in \alpha^{\sigma}\) for some \(1 \le j \le k-1\). We want to understand the position of \(\beta_j\) in \(\alpha^{\sigma}\). Let \(1 \le i \le r\) be such that \(\alpha^{\sigma}_i = \beta_j\).  Note that \( \{\alpha_1, \ldots, \alpha_{j}\} \) all are in \( \alpha^{\sigma} \) and smaller than  \( \beta_j\), hence we have \(i>j\). Since \(\beta_{j} \in \{l_{j-1}, \ldots, a_{j-1}+1\}\) and \(i>j\) implies \(\{l_{j-1}, \ldots, a_{j-1}+1\} \cap \{l_{i-1}, \ldots, a_i\} = \emptyset\), we have that the Pl\"ucker coordinate \(p_{\alpha^{\sigma}}\) is zero on \(X^{v_{\underline{l}}}_w\). 
		
		(b) Let \(\sigma\) be such that \(\alpha_{k},\beta_{k} \in \alpha^{\sigma}\). We want to understand the position of \(\alpha_k\) in \(\alpha^{\sigma}\). Let \(1 \le i \le r\) be such that \(\alpha^{\sigma}_i = \alpha_k\). Observe that \(\alpha^{\sigma}\) contains \(\{\alpha_1, \ldots, \alpha_{k-1},\beta_{k}\}\) and all are strictly smaller than \(\alpha_k\). Hence we have \(i > k\). Since \(\alpha_k \in \{l_{k-1}, \ldots, a_{k-1}+1\}\) and $i > k$ implies \(\{l_{k-1}, \ldots, a_{k-1}+1\} \cap \{l_{i-1}, \ldots, a_i\} = \emptyset\), we have that the Pl\"ucker coordinate \(p_{\alpha^{\sigma}}\) is zero on \(X^{v_{\underline{l}}}_w\).
	
	Note that $\alpha^{\sigma}=(\alpha_1 < \cdots < \alpha_{k-1} < \beta_k< \alpha_{k+1}< \cdots< \alpha_r)$ follows from $\alpha_i, \beta_i \in [l_{i-1},a_{i-1}+1]$ (see \ref{property:P1}). Similarly, we have $\beta^{\sigma}=(\beta_1< \cdots< \beta_{k-1} < \alpha_k < \beta_{k+1}< \cdots< \beta_r)$. It follows that \(\sigma(\Gamma)\) satisfies \ref{property:P1}.  
	
	Since $\mathtt{z}(\Gamma)$ is invariant under permuting entries inside any row, we have \(\mathtt{z}(\Gamma) = \mathtt{z}(\sigma(\Gamma))\).
\end{proof}
	\begin{corollary}
		Let \(\Gamma\) be a column standard Young tableau of shape $t\omega_r$ which satisfies \ref{property:P1}. Then there is a semistandard tableau \(\mathtt{s}(\Gamma)\) of shape \(t\omega_r\) such that on the Richardson variety \(X^{v_{\underline{l}}}_{w}\) we have
		\[
		f_{\Gamma} = f_{\mathtt{s}(\Gamma)}.
		\]
		Also, \(\mathtt{s}(\Gamma)\) satisfies \ref{property:P1} and \(\mathtt{z}(\Gamma) = \mathtt{z}(\mathtt{s}(\Gamma))\).
		\label{cor:straightenstep12}
	\end{corollary}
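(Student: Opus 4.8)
The plan is to reduce the general statement for a shape-$t\omega_r$ tableau satisfying \ref{property:P1} to the single-swap result already proved in \cref{lemma:firstpartrel}, and then iterate. First I would observe that a column standard tableau that is \emph{not} semistandard must fail the row semistandard condition somewhere: there exist adjacent columns $j, j+1$ and a row index $i$ with $\Gamma(i,j) > \Gamma(i,j+1)$. Taking $\alpha = col_j(\Gamma)$ and $\beta = col_{j+1}(\Gamma)$, the smallest such failing row $k$ gives exactly the hypothesis $\alpha_1 \le \beta_1, \ldots, \alpha_{k-1} \le \beta_{k-1}, \alpha_k > \beta_k$ needed to invoke \cref{lemma:firstpartrel} on the pair of columns $(j, j+1)$. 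Since $\Gamma$ satisfies \ref{property:P1}, so does the sub-tableau $\Gamma'$ of shape $2\omega_r$ formed by these two columns; the lemma then replaces this $2\omega_r$ block by $\sigma(\Gamma')$ while preserving $f$ (as $f_{\Gamma'} = f_{\sigma(\Gamma')}$ on $X^{v_{\underline{l}}}_w$), preserving \ref{property:P1}, and preserving $\mathtt{z}$.

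Next I would splice the swapped block back into the ambient tableau: applying the transposition $(\alpha_k, \beta_k)$ to entries $(i,j)$ and $(i,j+1)$ produces a new column standard tableau $\tilde\Gamma$ of shape $t\omega_r$ with $f_{\tilde\Gamma} = f_\Gamma$ on $X^{v_{\underline{l}}}_w$, still satisfying \ref{property:P1}, and with $\mathtt{z}(\tilde\Gamma) = \mathtt{z}(\Gamma)$. For the $f$ and $\mathtt{z}$ claims on the full tableau I would note that $f$ is multiplicative over columns, so replacing one $2\omega_r$ block by an $f$-equal block leaves $f_\Gamma$ unchanged (here \cref{lemma:firstpartrel} only swaps entries within fixed rows between two columns, so column standardness of the other columns is untouched and the resulting columns $\alpha^\sigma, \beta^\sigma$ are still strictly increasing by the explicit description in the lemma); and $\mathtt{z}$ is invariant under permuting entries within a row, so swapping entries in the same row $i$ across two columns does not change any row multiplicity $\mathtt{z}_j$.

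The remaining issue is \emph{termination} of the iteration, and this is the step I expect to be the main obstacle. Each straightening move is a swap that makes the pair of columns ``more sorted,'' and the standard Standard Monomial Theory argument uses a monotonicity/well-ordering invariant — for instance the tuple of column labels decreases strictly in a suitable (e.g. lexicographic or ``degree of non-standardness'') total order under each application of a Pl\"ucker relation. I would define such an order on column standard tableaux of shape $t\omega_r$ (the usual choice records, row by row, how far the tableau is from being row non-decreasing) and verify that the move of \cref{lemma:firstpartrel} strictly decreases it; since the order is a well-order on a finite set of candidate tableaux (all entries lie in $[1,n]$, so there are finitely many), the process terminates at a tableau $\mathtt{s}(\Gamma)$ with no row-descent, i.e. a semistandard tableau. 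Because each intermediate step preserves $f$, membership in \ref{property:P1}, and $\mathtt{z}$, the terminal tableau $\mathtt{s}(\Gamma)$ inherits all three properties, giving $f_\Gamma = f_{\mathtt{s}(\Gamma)}$ on $X^{v_{\underline{l}}}_w$, $\mathtt{s}(\Gamma)$ satisfying \ref{property:P1}, and $\mathtt{z}(\Gamma) = \mathtt{z}(\mathtt{s}(\Gamma))$, as required. The one subtlety to double-check is that the swaps, which can in principle disturb previously-sorted adjacent pairs, still decrease the chosen global invariant; this is exactly the content of the classical straightening algorithm, so I would either cite the standard reference or pin down the invariant explicitly to make termination unambiguous.
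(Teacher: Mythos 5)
Your proposal is correct in substance and rests on the same engine as the paper's proof --- iterating the binomial swap of \cref{lemma:firstpartrel} on adjacent columns --- but the two arguments organize the iteration differently, precisely at the point you flag as the main obstacle. You allow a swap at an arbitrary adjacent descent (at the minimal failing row of that column pair) and therefore need a global well-founded invariant for termination, which you leave at the level of ``pin down or cite the classical argument.'' That loose end is closable in one line: each application of \cref{lemma:firstpartrel} is an adjacent transposition inside a single row and changes no other entry of the tableau, so the total inversion count $\#\{(i,j,j') \mid j<j',\ \Gamma(i,j)>\Gamma(i,j')\}$ drops by exactly one per swap (new \emph{adjacent} descents can indeed appear, as you worry, but the global inversion count still falls), so the process halts at a row-semistandard, hence semistandard, tableau. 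The paper instead inducts on rows: it first sorts row $1$ by permuting whole columns (which changes neither $f_\Gamma$, nor column-standardness, nor \ref{property:P1}, nor $\mathtt{z}$), and then, assuming rows $1,\dots,i-1$ are already non-decreasing, sorts row $i$ by adjacent swaps; in that ordering the minimal failing row of any descent between columns $j,j+1$ is automatically $k=i$ (so the hypothesis of \cref{lemma:firstpartrel} is checked once and for all), the swaps never disturb the rows above, and termination within a row is plain bubble sort. So the two proofs are the same algebra; the paper's row-by-row bookkeeping makes termination trivial, while your arbitrary-order version buys flexibility at the cost of needing the inversion-count argument --- which works, but should be stated explicitly rather than left as a placeholder for a citation.
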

	\begin{proof}
		Proof is by induction on row indices. We write row \( 1 \) is in non-decreasing order by rearranging columns of \(\Gamma\). Let \( 1 < i \leq r \) and assume that all rows from \( 1 \) to \( i-1 \) are in non-decreasing order. We convert row \(i\) in non-decreasing order by applying following step multiple times.
		\begin{center}\fbox{
		\begin{minipage}{0.9\textwidth}
			Let \(\Gamma(i,j) > \Gamma(i,j+1)\) for some $1 \leq j \leq t-1$. Then by using \cref{lemma:firstpartrel} on columns \(j,j+1\) we swap the entries in boxes \((i,j),(i,j+1)\) without changing any other entry in \(\Gamma\).
		\end{minipage}}
		\end{center}	
		Hence we get a semistandard tableau \( \mathtt{s}(\Gamma) \) such that \( f_{\Gamma} = f_{\mathtt{s}(\Gamma)} \) and  \(\mathtt{s}(\Gamma)(i,j) \in \{l_{i-1}, \ldots, a_{i-1}+1\}\). Hence $\mathtt{s}(\Gamma)$ satisfies \ref{property:P1}.	
	\end{proof}
	\begin{lemma}
		Let \(\Gamma\) be a column standard Young tableau of shape $2\omega_r$ which satisfies \ref{property:P2} such that \( col_1(\Gamma)=\alpha = (\alpha_1, \ldots, \alpha_r),\) and \(col_2(\Gamma)=\beta = (\beta_1, \ldots, \beta_r) \in I(r,n)\). Let \(1 \le k \le r\) be such that \(\alpha_r \le \beta_r,\cdots,\alpha_{k+1} \le \beta_{k+1}, \alpha_{k} > \beta_{k}\). Then on the Richardson variety \(X^{v_{\underline{l}}}_{w}\), the Pl\"ucker relation $\mathcal{P}(\alpha, \beta, k)$ is 
		\[
		p_{\alpha}p_{\beta} - p_{\alpha^{\sigma}}p_{\beta^{\sigma}}=0
		\]
		where \( \sigma \) is the transposition  \((\alpha_{k},\beta_{k}) \). 
		
		Moreover, let $\sigma(\Gamma)$ be the column standard Young tableau of shape $2\omega_r$ with \(col_1(\sigma(\Gamma))=\alpha^{\sigma}\) and \(col_2(\sigma(\Gamma))=\beta^{\sigma}\). Then $f_{\Gamma}=f_{\sigma(\Gamma)}$. Also, \(\sigma(\Gamma)\) satisfies \ref{property:P2}.
		\label{lemma:secondpartrel}
	\end{lemma}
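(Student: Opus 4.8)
The plan is to mirror the proof of \cref{lemma:firstpartrel}, replacing the role of $\alpha^{\sigma}$ and the lower ends $[l_{i-1},a_{i-1}+1]$ of the Pl\"ucker intervals by $\beta^{\sigma}$ and the upper ends $[a_{i-1}+1,a_i]$ supplied by \ref{property:P2}. Concretely, I will show that among all shuffle permutations $\sigma$ of $M=\{\alpha_k,\ldots,\alpha_r\}$ and $N=\{\beta_1,\ldots,\beta_k\}$, the only ones for which $p_{\alpha^{\sigma}}p_{\beta^{\sigma}}$ is non-zero on $X^{v_{\underline{l}}}_{w}$ are $\sigma=id$ and $\sigma=(\alpha_k,\beta_k)$; the relation $\mathcal{P}(\alpha,\beta,k)$ then collapses to the asserted binomial, with the coefficient $-1$ of the transposition term coming from $sgn(\sigma)$ exactly as in the definition of $\mathcal{P}(\alpha,\beta,k)$.

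First I would record the case analysis. If $\sigma$ is neither the identity nor the transposition $(\alpha_k,\beta_k)$, then looking at the non-fixed part $\sigma(N)$ of $\beta^{\sigma}$ one of two situations occurs: (a) some $\alpha_j$ with $k+1\le j\le r$ lies in $\beta^{\sigma}$, or (b) both $\alpha_k$ and $\beta_k$ lie in $\beta^{\sigma}$. Indeed, since $\sigma\ne id$ the set $\sigma(N)$ meets $M$; if it contains some $\alpha_j$ with $j>k$ we are in case (a), and otherwise $\sigma(N)\cap M=\{\alpha_k\}$, so $\sigma(N)=(N\setminus\{\beta_m\})\cup\{\alpha_k\}$, where $m=k$ gives the excluded transposition and $m<k$ forces $\beta_k\in\beta^{\sigma}$, i.e.\ case (b).

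In case (a) I would locate the position $i$ with $\beta^{\sigma}_i=\alpha_j$. The fixed entries $\beta_j,\beta_{j+1},\ldots,\beta_r$ all lie in $\beta^{\sigma}$, and using the pivot inequality $\alpha_j\le\beta_j$ together with distinctness of the entries of a Pl\"ucker index (which upgrades $\alpha_j\le\beta_j$ to $\alpha_j<\beta_j$, as $\alpha_j$ and $\beta_j$ cannot both occur in $\beta^{\sigma}$), these are $r-j+1$ entries strictly larger than $\alpha_j$; hence $i\le j-1<j$. Since $\alpha_j\in[a_{j-1}+1,a_j]$ by \ref{property:P2}, while $[a_{j-1}+1,a_j]\cap[l_{i-1},a_i]=\emptyset$ whenever $i<j$ (because the upper end $a_i\le a_{j-1}$ lies strictly below $a_{j-1}+1$, see \ref{sequence}), we get $\alpha_j\notin[l_{i-1},a_i]$, so $p_{\beta^{\sigma}}=0$ on $X^{v_{\underline{l}}}_{w}$. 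Case (b) is identical with $\alpha_j$ replaced by $\beta_k$: the $r-k+1$ entries $\alpha_k,\beta_{k+1},\ldots,\beta_r$ of $\beta^{\sigma}$ are all strictly greater than $\beta_k$ (using the pivot inequality $\alpha_k>\beta_k$), forcing the position of $\beta_k$ to be $<k$, and the same interval disjointness gives $p_{\beta^{\sigma}}=0$.

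Finally, for $\sigma=(\alpha_k,\beta_k)$ I would verify the ``Moreover'' claims. Here $\alpha^{\sigma}$ and $\beta^{\sigma}$ are obtained from $\alpha$ and $\beta$ by interchanging their $k$-th entries, and from \ref{property:P2} one has $\alpha_{k-1}\le a_{k-1}<a_{k-1}+1\le\beta_k<\alpha_k<\alpha_{k+1}$ (and symmetrically for $\beta^{\sigma}$), so both remain strictly increasing tuples with $k$-th entry in $[a_{k-1}+1,a_k]$; hence $\sigma(\Gamma)$ again satisfies \ref{property:P2}. The equality $f_{\Gamma}=f_{\sigma(\Gamma)}$ is then simply the binomial relation $p_{\alpha}p_{\beta}=p_{\alpha^{\sigma}}p_{\beta^{\sigma}}$ read as an identity of functions on $X^{v_{\underline{l}}}_{w}$. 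The main obstacle is the position-counting step establishing $i<j$ (resp.\ $i<k$): the crux is to promote the non-strict pivot inequalities to strict ones via the distinctness of entries in a Pl\"ucker index, which is exactly what separates the surviving shuffles from the discarded ones. Note that, unlike \cref{lemma:firstpartrel}, no weight-preservation statement $\mathtt{z}(\Gamma)=\mathtt{z}(\sigma(\Gamma))$ is needed, since every tableau satisfying \ref{property:P2} already has $\mathtt{z}=0$ by \cref{lemmaP2:z0}.
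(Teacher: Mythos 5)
Your proposal is correct and takes essentially the same approach as the paper's proof: the same dichotomy between shuffles of type (a) ($\alpha_j\in\beta^{\sigma}$ for some $j>k$) and type (b) ($\alpha_k,\beta_k\in\beta^{\sigma}$), the same position count forcing the offending entry into a position $i<j$ (resp.\ $i<k$), and the same interval disjointness $\{a_{j-1}+1,\ldots,a_j\}\cap\{l_{i-1},\ldots,a_i\}=\emptyset$ to conclude $p_{\beta^{\sigma}}=0$ on $X^{v_{\underline{l}}}_{w}$, followed by the same direct verification that $\sigma(\Gamma)$ satisfies \ref{property:P2}. Your explicit justification of the case split and the strictness upgrade from $\alpha_j\le\beta_j$ to $\alpha_j<\beta_j$ are details the paper leaves implicit, but the argument is the same.
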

	\begin{proof}
		We will prove that in the Pl\"ucker relation \(\mathcal{P}(\alpha,\beta,k) \), the monomial \( p_{\alpha^{\sigma}}p_{\beta^{\sigma}}\) is non-zero on the Richardson variety $X^{v_{\underline{l}}}_w$ if and only if \( \sigma =id \) or \(\sigma = (\alpha_{k},\beta_{k})\). Observe that if $\sigma$ is other than $id$ and the transposition $(\alpha_k, \beta_k)$ then either (a) \(\alpha_j\in \beta^{\sigma}\) for some \(k+1 \le j \le r\) or (b) \(\alpha_{k},\beta_{k} \in \beta^{\sigma}\).
		
		(a) Let \(\sigma\) be such that \(\alpha_j\in \beta^{\sigma}\) for some \(k+1 \le j \le r\). We want to understand the position of \(\alpha_j\) in \(\beta^{\sigma}\). Let \(1 \le i \le r\) be such that \(\beta^{\sigma}_i = \alpha_j\). Note that \( \{\beta_{r}, \ldots, \beta_{j}\} \) all are in \( \beta^{\sigma} \) and bigger than  \( \alpha_j\), hence \(i<j\). Since \(\beta^{\sigma}_i \in \{a_{j-1}+1, \ldots, a_j\}\) and $i < j$ implies \(\{a_{j-1}+1, \ldots, a_j\} \cap \{l_{i-1}, \ldots, a_i\} = \emptyset\), we have that the Pl\"ucker coordinate \(p_{\beta^{\sigma}}\) is zero on \(X^{v_{\underline{l}}}_w\). 
		
		(b) Let \(\sigma\) be such that \(\alpha_{k},\beta_{k} \in \beta^{\sigma}\). We want to understand the position of $\beta_k$ in $\beta^{\sigma}$. Let $1 \leq i \leq r$ be such that \(\beta^{\sigma}_i = \beta_{k}\). Observe that \(\beta^{\sigma}\) contains \(\{\alpha_k,\beta_{k+1}, \ldots, \beta_{r}\}\) and all are bigger than \(\beta_k\). Hence we have \(i < k\). Since \(\beta^{\sigma}_i \in \{a_{k-1}+1, \ldots, a_{k}\}\) and $i<k$ implies \(\{a_{k-1}+1, \ldots, a_{k}\} \cap \{l_{i-1}, \ldots, a_i\} = \emptyset\), we have the Pl\"ucker coordinate \(p_{\beta^{\sigma}}\) is zero on \(X^{v_{\underline{l}}}_w\).
		
	
	Note that $\alpha^{\sigma}=(\alpha_1 < \cdots < \alpha_{k-1} < \beta_k< \alpha_{k+1}< \cdots< \alpha_r)$ follows from $\alpha_i, \beta_i \in \{a_{i-1}+1, \ldots, a_{i}\}$ (see \ref{property:P2}). Similarly, we have $\beta^{\sigma}=(\beta_1< \cdots< \beta_{k-1} < \alpha_k < \beta_{k+1}< \cdots< \beta_r)$. It follows that \(\sigma(\Gamma)\) satisfies \ref{property:P2}. 
	
\end{proof}

	\begin{corollary}
		\label{cor:straightenstep22}
		Let \(\Gamma\) be a column standard Young tableau of shape $t\omega_r$ which satisfies \ref{property:P2}. Then there is a semistandard tableau \(\mathtt{s}(\Gamma)\) of shape \(t\omega_r\) which is non-zero on the Richardson variety \(X^{v_{\underline{l}}}_{w}\) such that
		\[
		f_{\Gamma} = f_{\mathtt{s}(\Gamma)}
		\]
		Also, we have \(\mathtt{s}(\Gamma)\) satisfies \ref{property:P2}.
	\end{corollary}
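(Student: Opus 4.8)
The plan is to mirror the proof of \cref{cor:straightenstep12}, reversing the direction of the induction. There the rows were straightened from the top downward using \cref{lemma:firstpartrel}, whose hypothesis $\alpha_1 \le \beta_1, \ldots, \alpha_{k-1} \le \beta_{k-1}$ concerns the rows \emph{above} the pivot. The tool available here is \cref{lemma:secondpartrel}, whose hypothesis $\alpha_r \le \beta_r, \ldots, \alpha_{k+1} \le \beta_{k+1}$ concerns instead the rows \emph{below} the pivot $k$. Accordingly I would induct on the rows from the bottom upward, keeping the invariant that once we begin straightening row $i$, all rows $i+1, \ldots, r$ are already weakly increasing across columns.

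First I would sort row $r$ into non-decreasing order simply by permuting the columns of $\Gamma$; since $f_\Gamma$ is a commutative product of Pl\"ucker coordinates this changes neither $f_\Gamma$ nor \ref{property:P2}. Suppose now for some $1 \le i < r$ that rows $i+1, \ldots, r$ are already non-decreasing. To straighten row $i$ I would iterate the following bubble step: whenever $\Gamma(i,j) > \Gamma(i,j+1)$ for two adjacent columns, apply \cref{lemma:secondpartrel} to the width-two subtableau with $col_1 = col_j(\Gamma)$, $col_2 = col_{j+1}(\Gamma)$ and pivot $k = i$. Its hypothesis is exactly that $\Gamma(m,j) \le \Gamma(m,j+1)$ for all $m > i$, which holds by the inductive invariant, together with $\Gamma(i,j) > \Gamma(i,j+1)$, which is the condition triggering the step. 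The lemma then shows the relevant Pl\"ucker relation reduces on $X^{v_{\underline{l}}}_w$ to $p_\alpha p_\beta - p_{\alpha^\sigma}p_{\beta^\sigma} = 0$ with $\sigma = (\Gamma(i,j), \Gamma(i,j+1))$, so the net effect is to transpose precisely the two entries in row $i$ of columns $j$ and $j+1$, leaving all other boxes unchanged while preserving the $f$-value, column-standardness (guaranteed by the lemma), and \ref{property:P2}. Because the swap touches only row $i$, the rows below remain sorted, so the invariant is maintained; iterating finitely many times (the number of inversions in row $i$ strictly decreases) makes row $i$ non-decreasing. Descending to $i=1$ yields the desired semistandard tableau $\mathtt{s}(\Gamma)$ satisfying \ref{property:P2} with $f_\Gamma = f_{\mathtt{s}(\Gamma)}$ on the Richardson variety.

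It then remains to record the non-vanishing claim, which is immediate: since $\mathtt{s}(\Gamma)$ satisfies \ref{property:P2}, every column $\tau$ has $\tau_i \in \{a_{i-1}+1, \ldots, a_i\}$, and because $l_{i-1} \le a_{i-1}$ by \ref{sequence} we get $\tau_i \in [l_{i-1}, a_i]$ for all $i$, so each column is a non-vanishing Pl\"ucker coordinate and the semistandard product is a non-zero standard monomial on $X^{v_{\underline{l}}}_w$. The only essential difference from \cref{cor:straightenstep12}, and the step I expect to need the most care, is orchestrating the induction so that \cref{lemma:secondpartrel} is always applicable: its pivot hypothesis is about rows strictly below $i$, which forces the bottom-up ordering and is precisely what makes each localized swap legitimate without disturbing the portion of the tableau already put in order.
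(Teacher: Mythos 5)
Your proposal is correct and is essentially the paper's own proof: a descending induction that first sorts row $r$ by permuting whole columns and then removes adjacent inversions in row $i$ by swaps justified by \cref{lemma:secondpartrel}, mirroring \cref{cor:straightenstep12} bottom-up, with the non-vanishing claim read off from \ref{property:P2} plus semistandardness. One cosmetic point: your final step cites $l_{i-1}\le a_{i-1}$, which fails for $i=1$ (where $l_0=1$ and $a_0=0$); the inequality actually needed is $l_{i-1}\le a_{i-1}+1$, which holds for every $i$ by \ref{sequence} together with the conventions $l_0=1$, $a_0=0$.
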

	\begin{proof}
		Proof is by descending induction.  We write row \( r \) in non-decreasing order by rearranging columns of \(\Gamma\). Let \( 1 \le i < r \) and assume that all rows from \( i+1 \) to \( r \) are in non-decreasing order. 
		\begin{center}\fbox{
				\begin{minipage}{0.9\textwidth}
				Let \(\Gamma(i,j) > \Gamma(i,j+1)\) for some $1 \leq j \leq t-1$. Then by applying \cref{lemma:secondpartrel} on columns \(j,j+1\), we swap the entries in boxes \((i,j),(i,j+1)\) without changing any other entry in \(\Gamma\).
			\end{minipage}}
		\end{center}
	 After multiple applications of above step we write row \( i \) in non-decreasing order. Hence we get row semistandard tableau \( \mathtt{s}(\Gamma) \) such that \( f_{\Gamma} = f_{\mathtt{s}(\Gamma)} \) and  \(\mathtt{s}(\Gamma)(i,j) \in \{a_{i-1}+1, \ldots, a_i\}\). Hence $\mathtt{s}(\Gamma)$ satisfies \ref{property:P2}.
	\end{proof}
	\begin{theorem}
		Let \(\Gamma_1\) and \(\Gamma_2\) be two \(T\)-invariant semistandard Young tableaux of shape \(nd_1\omega_r\) and \(nd_2\omega_r\) respectively on $X^{v_{\underline{l}}}_w$. Then there exists a semistandard \(T\)-invariant Young tableau \(\mathtt{s}(\Gamma_1\Gamma_2)\) such that \(f_{\Gamma_1}f_{\Gamma_2} = f_{\mathtt{s}(\Gamma_1\Gamma_2)} \) with the property that \(\mathtt{z}(\mathtt{s}(\Gamma_1\Gamma_2)) = \mathtt{z}(\Gamma_1)+\mathtt{z}(\Gamma_2)\).
		\label{thm:straightenTinv}
	\end{theorem}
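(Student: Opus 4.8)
The plan is to reduce the product of two $T$-invariant semistandard tableaux to a single semistandard $T$-invariant tableau by using the splitting of \cref{def:split} together with the two straightening corollaries, in a way that keeps the $\mathtt{z}$-vector under control throughout. First I would write $\Gamma_1 = \Gamma_1^{(1)}\Gamma_1^{(2)}$ and $\Gamma_2 = \Gamma_2^{(1)}\Gamma_2^{(2)}$ as in \cref{def:split}, so that each $\Gamma_j^{(1)}$ satisfies \ref{property:P1} and each $\Gamma_j^{(2)}$ satisfies \ref{property:P2} by \cref{def:split:P1P2z}. The column-standard product $\Gamma_1\Gamma_2$ then has the form (as a concatenation of columns) $\Gamma_1^{(1)}\Gamma_1^{(2)}\Gamma_2^{(1)}\Gamma_2^{(2)}$, and $f_{\Gamma_1}f_{\Gamma_2} = f_{\Gamma_1\Gamma_2}$ on $X^{v_{\underline{l}}}_w$ by definition of the product tableau.

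The key idea is that I can freely commute the middle two blocks $\Gamma_1^{(2)}$ and $\Gamma_2^{(1)}$ past each other using the binomial Pl\"ucker relations established in \cref{lemma:firstpartrel} and \cref{lemma:secondpartrel}. More precisely, grouping all the \ref{property:P1}-columns to the left and all the \ref{property:P2}-columns to the right: by \cref{prop-remark:mult} the concatenation $\Gamma_1^{(1)}\Gamma_2^{(1)}$ satisfies \ref{property:P1} and $\Gamma_1^{(2)}\Gamma_2^{(2)}$ satisfies \ref{property:P2}. I then apply \cref{cor:straightenstep12} to straighten the \ref{property:P1}-part into a semistandard tableau $\mathtt{s}(\Gamma_1^{(1)}\Gamma_2^{(1)})$ satisfying \ref{property:P1} with the same $f$ and the same $\mathtt{z}$, and \cref{cor:straightenstep22} to straighten the \ref{property:P2}-part into a semistandard tableau $\mathtt{s}(\Gamma_1^{(2)}\Gamma_2^{(2)})$ satisfying \ref{property:P2} with the same $f$. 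By \cref{prop-remark:sst} the product $\mathtt{s}(\Gamma_1^{(1)}\Gamma_2^{(1)})\,\mathtt{s}(\Gamma_1^{(2)}\Gamma_2^{(2)})$ is again semistandard; call it $\mathtt{s}(\Gamma_1\Gamma_2)$. Since every relation used is a binomial $p_\alpha p_\beta = p_{\alpha^\sigma}p_{\beta^\sigma}$, we obtain $f_{\Gamma_1}f_{\Gamma_2} = f_{\mathtt{s}(\Gamma_1\Gamma_2)}$ on $X^{v_{\underline{l}}}_w$.

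For the weight bookkeeping, I would track $\mathtt{z}$ additively through \cref{def:z:adds}: the two straightening corollaries preserve $\mathtt{z}$ of the \ref{property:P1}-block, while \cref{lemmaP2:z0} forces $\mathtt{z}(\mathtt{s}(\Gamma_1^{(2)}\Gamma_2^{(2)})) = 0$, so $\mathtt{z}(\mathtt{s}(\Gamma_1\Gamma_2)) = \mathtt{z}(\mathtt{s}(\Gamma_1^{(1)}\Gamma_2^{(1)})) = \mathtt{z}(\Gamma_1^{(1)}) + \mathtt{z}(\Gamma_2^{(1)}) = \mathtt{z}(\Gamma_1) + \mathtt{z}(\Gamma_2)$, where the last equality uses \cref{def:split:P1P2z}. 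It remains to check that $\mathtt{s}(\Gamma_1\Gamma_2)$ is genuinely $T$-invariant, i.e.\ lies in $\mathtt{ST}(\lambda_{d_1+d_2})$: this follows because $T$-invariance is detected by the integer equalities $wt_i = (d_1+d_2)r$, which depend only on $f$, and $f$ is unchanged; alternatively one verifies the three conditions of \cref{lemma:Tinv-eq-condition} directly. The main subtlety I expect is justifying that commuting the \ref{property:P2}-columns of $\Gamma_1$ leftward past the \ref{property:P1}-columns of $\Gamma_2$ (i.e.\ reordering the four blocks into P1-then-P2 order) is actually realized by the binomial relations of \cref{lemma:firstpartrel,lemma:secondpartrel} applied columnwise — one must confirm that every intermediate swap stays within the nonvanishing Pl\"ucker coordinates on $X^{v_{\underline{l}}}_w$, which is exactly the content those lemmas guarantee for columns sharing the P1 (resp.\ P2) property, and that mixing a P1 column with a P2 column is never needed because straightening can be carried out block-by-block.
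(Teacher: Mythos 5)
Your proposal is, in structure, exactly the paper's proof: split via \cref{def:split}, regroup into a \ref{property:P1}-block $\Gamma_1^{(1)}\Gamma_2^{(1)}$ and a \ref{property:P2}-block $\Gamma_1^{(2)}\Gamma_2^{(2)}$ (\cref{prop-remark:mult}), straighten each block by \cref{cor:straightenstep12} and \cref{cor:straightenstep22}, glue by \cref{prop-remark:sst}, and track $\mathtt{z}$ through \cref{def:z:adds}, \cref{lemmaP2:z0} and \cref{def:split:P1P2z}. That argument is correct and is the one the paper gives.

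The one point you should fix is the step you call ``the main subtlety'': reordering the four blocks requires no Pl\"ucker relations at all. By definition $f_\Gamma$ is the product of the Pl\"ucker coordinates indexed by the columns of $\Gamma$, so it is unchanged by any permutation of the columns simply because the coordinate ring is commutative; thus $f_{\Gamma_1}f_{\Gamma_2}=f_{\Gamma_1^{(1)}}f_{\Gamma_1^{(2)}}f_{\Gamma_2^{(1)}}f_{\Gamma_2^{(2)}}=f_{\Gamma_1^{(1)}\Gamma_2^{(1)}}\,f_{\Gamma_1^{(2)}\Gamma_2^{(2)}}$ holds identically, with no relation invoked. This matters because the mechanism you propose would actually fail if attempted: \cref{lemma:firstpartrel,lemma:secondpartrel} apply only to a pair of columns that both satisfy \ref{property:P1} or both satisfy \ref{property:P2} (and satisfy the relevant ordering hypothesis), so they cannot be used to carry a \ref{property:P2}-column of $\Gamma_1^{(2)}$ past a \ref{property:P1}-column of $\Gamma_2^{(1)}$. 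The binomial relations are needed only \emph{inside} each block, which is exactly where the two corollaries use them. Finally, the $T$-invariance of $\mathtt{s}(\Gamma_1\Gamma_2)$, which you treat somewhat indirectly, is immediate: every swap performed in the straightening lemmas permutes entries among boxes, so each multiplicity $wt_i$ is preserved and remains $r(d_1+d_2)$.
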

	\begin{proof}
		Using \cref{def:split} we have \(\Gamma_1 = \Gamma_1^{(1)}\Gamma_1^{(2)}\) and \(\Gamma_2 = \Gamma_2^{(1)}\Gamma_2^{(2)}\). 
		By \ref{def:split:P1P2z} we have \(\Gamma_1^{(1)}\) and \(\Gamma_2^{(1)}\) satisfy ~\ref{property:P1}. Hence by \labelcref{prop-remark:mult} we have \(\Gamma_1^{(1)}\Gamma_2^{(1)}\) satisfies ~\ref{property:P1}. Also by \cref{def:z:adds}  we have
		\begin{align*}
			\mathtt{z}(\Gamma_1^{(1)}\Gamma_2^{(1)}) 
			= \mathtt{z}(\Gamma_1^{(1)})+\mathtt{z}(\Gamma_2^{(1)}) 
		\end{align*} By \cref{cor:straightenstep12} there is semistandard tableau 	\(\mathtt{s}(\Gamma_1^{(1)}\Gamma_2^{(1)})\) such that \(f_{\Gamma_1^{(1)}\Gamma_2^{(1)}} = f_{\mathtt{s}(\Gamma_1^{(1)}\Gamma_2^{(1)})}\) with the properties that \(\mathtt{s}(\Gamma_1^{(1)}\Gamma_2^{(1)})\) satisfy \ref{property:P1} and
		\begin{align}
			\mathtt{z}(\mathtt{s}(\Gamma_1^{(1)}\Gamma_2^{(1)})) 
			&= \mathtt{z}(\Gamma_1^{(1)}\Gamma_2^{(1)}) \nonumber\\
			&=\mathtt{z}(\Gamma_1^{(1)})+\mathtt{z}(\Gamma_2^{(1)}) &&\text{(by \cref{def:z:adds})} \nonumber\\
			&=\mathtt{z}(\Gamma_1)+\mathtt{z}(\Gamma_2) &&\text{(by \cref{def:split:P1P2z})}.
				\label{eq:firstpartz}
		\end{align}
	
	Similarly By \cref{def:split:P1P2z} we have \(\Gamma_1^{(2)}\) and \(\Gamma_2^{(2)}\) satisfy ~\ref{property:P2}. Using \labelcref{prop-remark:mult} we have \(\Gamma_1^{(2)}\Gamma_2^{(2)}\) satisfy ~\ref{property:P2}.
		By \cref{cor:straightenstep22} there is semistandard tableau \(\mathtt{s}(\Gamma_1^{(2)}\Gamma_2^{(2)})\) such that \(f_{\Gamma_1^{(2)}\Gamma_2^{(2)}} = f_{\mathtt{s}(\Gamma_1^{(2)}\Gamma_2^{(2)})}\) with the properties that \(\mathtt{s}(\Gamma_1^{(2)}\Gamma_2^{(2)})\) satisfy \ref{property:P2}. Hence \begin{equation}
		\mathtt{z}(\mathtt{s}(\Gamma_1^{(2)}\Gamma_2^{(2)}))=0.
		\label{eq:secondpartz0}
		\end{equation}
			We define 
		\[\mathtt{s}(\Gamma_1\Gamma_2) := \mathtt{s}(\Gamma_1^{(1)}\Gamma_2^{(1)})\mathtt{s}(\Gamma_1^{(2)}\Gamma_2^{(2)})\]
		\(\mathtt{s}(\Gamma_1\Gamma_2)\) is semistandard follows from \labelcref{prop-remark:sst}. We have
		\begin{align*}
			f_{\Gamma_1}f_{\Gamma_2} 
			&= f_{\Gamma_1^{(1)}\Gamma_1^{(2)}}f_{\Gamma_2^{(1)}\Gamma_2^{(2)}}\\
			&= f_{\Gamma_1^{(1)}}f_{\Gamma_2^{(1)}}f_{\Gamma_1^{(2)}}f_{\Gamma_2^{(2)}}\\
			&= f_{\Gamma_1^{(1)}\Gamma_2^{(1)}}f_{\Gamma_1^{(2)}\Gamma_2^{(2)}}\\
			&= f_{\mathtt{s}(\Gamma_1^{(1)}\Gamma_2^{(1)})}f_{\mathtt{s}(\Gamma_1^{(2)}\Gamma_2^{(2)})}\\
			&= f_{\mathtt{s}(\Gamma_1^{(1)}\Gamma_2^{(1)})\mathtt{s}(\Gamma_1^{(2)}\Gamma_2^{(2)})}\\
			&= f_{\mathtt{s}(\Gamma_1\Gamma_2)}
		\end{align*}
		We now prove \(\mathtt{z}(\mathtt{s}(\Gamma_1\Gamma_2)) = \mathtt{z}(\Gamma_1)+\mathtt{z}(\Gamma_2)\).
		\begin{align*}
			\mathtt{z}(\mathtt{s}(\Gamma_1\Gamma_2)) 
			&=\mathtt{z}(\mathtt{s}(\Gamma_1^{(1)}\Gamma_2^{(1)})\mathtt{s}(\Gamma_1^{(2)}\Gamma_2^{(2)}))\\
			&=\mathtt{z}(\mathtt{s}(\Gamma_1^{(1)}\Gamma_2^{(1)}))
						+\mathtt{z}(\mathtt{s}(\Gamma_1^{(2)}\Gamma_2^{(2)})) &&\text{(by \cref{def:z:adds})}\\
			&= \mathtt{z}(\Gamma_1)+\mathtt{z}(\Gamma_2) &&\text{(by~\ref{eq:firstpartz} and \ref{eq:secondpartz0})}.
		\end{align*}
	\end{proof}

\subsection{Main theorem}\label{sec:proofmain}
	In this section we give a proof of \cref{thm:main}. First we construct the homogeneous coordinate ring of ${\mathbb P}^{a_1-l_1} \times {\mathbb P}^{a_2-l_2} \times \cdots \times {\mathbb P}^{a_{r-1}-l_{r-1}}$ embedded via the very ample line bundle $\mathcal{O}_{\mathbb{P}^{a_1-l_1}}(c_1) \boxtimes \mathcal{O}_{\mathbb{P}^{a_2-l_2}}(c_2) \boxtimes \cdots \boxtimes \mathcal{O}_{\mathbb{P}^{a_{r-1}-l_{r-1}}}(c_{r-1})$ and then we prove it is isomorphic to the algebra \(R\).
	\begin{definition}
		For \(1 \leq i \leq r-1\), let $\mathbb{C}[x_j : j \in C_{i,2}]$ be the polynomial ring generated by the indeterminate $x_j$'s. For \(d\in \mathbb{Z}_{\ge0}\), let \(A^{(i)}_d = \mathbb{C}[x_j : j \in C_{i,2}]_{dc_i}\) be the \(dc_{i}\)-th graded component of \(\mathbb{C}[x_{j}:j \in C_{i,2}]\). Let \(A_d =  \Big(\bigotimes_{i=1}^{r-1} A^{(i)}_d\Big)\).  A vector space basis of \(A_d\) is
		\[\mathfrak{A}_d=\Big\{
			\bigotimes_{i=1}^{r-1} \prod_{j \in C_{i,2}} x_j^{e_j} 
					: e_j \in \mathbb{Z}_{\ge 0}, \sum_{j \in C_{i,2}} e_j = dc_i
		\Big\}\]
		Define graded algebras \(A^{(i)} = \bigoplus_{d\ge0} A^{(i)}_d\) and \(A = \bigoplus_{d \ge 0} A_d\) with component wise multiplication.
		\label{def:segrering}
	\end{definition} 
	\begin{lemma}
		For \(d \in \mathbb{Z}_{\ge 0}\), the map \(\bar{\mathtt{z}}_d\) induces an isomorphism of vector spaces from $R_d$ to $A_d$. 
		\label{cor:vecspiso-R-A}
	\end{lemma}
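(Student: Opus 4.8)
\textbf{Proof plan for \cref{cor:vecspiso-R-A}.}
The plan is to show that $\bar{\mathtt{z}}_d$ carries the standard-monomial basis of $R_d$ bijectively onto the monomial basis $\mathfrak{A}_d$ of $A_d$, and then to define the linear map as the $\mathbb{C}$-linear extension of this correspondence. Recall from \cref{lemma:bij} that $\bar{\mathtt{z}}_d$ is a bijection between $\mathtt{ST}(\lambda_d)$ and the set of integer points $\mathtt{P}_d$, and that $\mathtt{ST}(\lambda_d)$ is exactly the set of $T$-invariant semistandard tableaux attached to the nonzero standard monomials spanning $R_d$. First I would identify the two natural bases explicitly. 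On the $R_d$ side, the standard monomials $f_{\Gamma}$ with $\Gamma \in \mathtt{ST}(\lambda_d)$ form a vector space basis of $R_d$: they are linearly independent because standard monomials are linearly independent by the first part of the Standard Monomial Theorem, and they span $R_d$ because every $T$-invariant section is a combination of standard monomials whose tableaux are forced to be $T$-invariant by \ref{Tinvariance:condition2}. On the $A_d$ side, $\mathfrak{A}_d$ is a basis by definition.

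The key observation is that there is a canonical bijection between $\mathtt{P}_d$ and $\mathfrak{A}_d$: an integer point $\bar z = (z_j)_{j\in C} \in \mathtt{P}_d$ satisfies $\sum_{j\in C_{i,2}} z_j = dc_i$ for each $1\le i\le r-1$, which is precisely the condition defining the exponent vectors in $\mathfrak{A}_d$ (with $e_j = z_j$). Hence the assignment $\bar z \mapsto \bigotimes_{i=1}^{r-1}\prod_{j\in C_{i,2}} x_j^{z_j}$ is a bijection $\mathtt{P}_d \to \mathfrak{A}_d$. Composing with the bijection $\bar{\mathtt{z}}_d \colon \mathtt{ST}(\lambda_d)\to \mathtt{P}_d$ of \cref{lemma:bij}, I obtain a bijection from the basis $\{f_\Gamma : \Gamma\in\mathtt{ST}(\lambda_d)\}$ of $R_d$ to the basis $\mathfrak{A}_d$ of $A_d$, sending $f_\Gamma \mapsto \bigotimes_{i=1}^{r-1}\prod_{j\in C_{i,2}} x_j^{\mathtt{z}_j(\Gamma)}$. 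Extending $\mathbb{C}$-linearly gives a vector space isomorphism, since a linear map carrying one basis bijectively onto another is automatically an isomorphism.

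The only genuinely load-bearing point, which I would state carefully, is that $\{f_\Gamma : \Gamma\in\mathtt{ST}(\lambda_d)\}$ really is a basis of $R_d$ rather than merely a spanning set or an independent set. Independence is immediate from standard monomial theory. For spanning, I would argue that $H^0(X^{v_{\underline l}}_w,\mathcal{L}(n\omega_r)^{\otimes d})$ has the standard monomials indexed by semistandard $\Gamma$ of shape $\lambda_d$ (with first column $\ge v_{\underline l}$ and last column $\le w$) as a basis, the $T$-action is diagonal on this basis, and an element is $T$-invariant if and only if it is a combination of those basis monomials of weight zero; by the weight computation recalled at the end of \cref{section2}, the weight-zero condition is exactly $wt_i(\Gamma)=dr$ for all $i$, i.e. \ref{Tinvariance:condition2}, together with the support conditions encoded in \ref{Tinvariance:condition3}. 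This identifies the $T$-invariant standard monomials precisely with $\{f_\Gamma:\Gamma\in\mathtt{ST}(\lambda_d)\}$, completing the verification. I do not expect any serious obstacle here; the work has all been front-loaded into \cref{lemma:bij} and the structural lemmas of \cref{section3}, so this lemma is essentially a bookkeeping step reconciling the three indexing sets $\mathtt{ST}(\lambda_d)$, $\mathtt{P}_d$, and $\mathfrak{A}_d$.
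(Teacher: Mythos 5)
Your proposal is correct and follows essentially the same route as the paper: compose the bijection $\bar{\mathtt{z}}_d \colon \mathtt{ST}(\lambda_d) \to \mathtt{P}_d$ of \cref{lemma:bij} with the evident bijection $\mathtt{P}_d \to \mathfrak{A}_d$, $\bar z \mapsto \bigotimes_{i=1}^{r-1}\prod_{j\in C_{i,2}} x_j^{z_j}$, and extend linearly between the two bases. The only difference is that you spell out why $\{f_\Gamma : \Gamma \in \mathtt{ST}(\lambda_d)\}$ is a basis of $R_d$ (via standard monomial theory and the $T$-weight argument), which the paper leaves implicit in its definition of $\mathtt{ST}(\lambda_d)$ and the results recalled in \cref{section2}.
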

	\begin{proof}
		Note that the natural map \[\Phi_d:  \mathtt{P}_d \longrightarrow \mathfrak{A}_d\] given by 
		\[ \bar{z} \mapsto \bigotimes_{i=1}^{r-1}\prod_{j \in C_{i,2}} x_j^{z_j} \] is a bijection.
		Then \[\begin{tikzcd}
		\mathtt{ST}(\lambda_d) \arrow[r, "\bar{\mathtt{z}}_d"] & \mathtt{P}_d   \arrow[r, "\Phi_d "] & \mathfrak{A}_d
		\end{tikzcd}\]
		given by
		\[
		f_{\Gamma} \mapsto  \bar{\mathtt{z}}_d(\Gamma) \mapsto  \bigotimes\limits_{i=1}^{r-1}\prod\limits_{j \in C_{i,2}} x_j^{\mathtt{z}_j(\Gamma)}
		\]
%
	is a bijection.	Thus \(\Phi_d \circ \bar{\mathtt{z}}_d\) induces an isomorphism of vector spaces from $R_d$ to $A_d$. 
	\end{proof}
\begin{definition}
	We define an isomorphism of vector spaces $\Phi: R \longrightarrow A$ as follows. For $d \in \mathbb{Z}_{\geq 0}$ we define $\Phi|_{R_d}=\Phi_d \circ \bar{\mathtt{z}}_d $ where $\Phi_d$ is as in \cref{cor:vecspiso-R-A}.
\end{definition}
	\begin{lemma} 
 $\Phi: R \longrightarrow A$ is an isomorphism of graded $\mathbb{C}$-algebras.
	\end{lemma}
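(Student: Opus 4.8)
The plan is to use what has already been established and reduce the statement to a single multiplicativity check. By \cref{cor:vecspiso-R-A} each component map $\Phi|_{R_d} = \Phi_d \circ \bar{\mathtt{z}}_d$ is a vector-space isomorphism $R_d \to A_d$, so $\Phi$ is already a graded isomorphism of vector spaces by construction. Hence it only remains to prove that $\Phi$ is multiplicative and unital. Since $R_d$ has the standard monomial basis $\{f_\Gamma : \Gamma \in \mathtt{ST}(\lambda_d)\}$ and $\Phi$ is linear, it is enough to verify $\Phi(f_{\Gamma_1} f_{\Gamma_2}) = \Phi(f_{\Gamma_1})\,\Phi(f_{\Gamma_2})$ for arbitrary $\Gamma_1 \in \mathtt{ST}(\lambda_{d_1})$ and $\Gamma_2 \in \mathtt{ST}(\lambda_{d_2})$.

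The engine of the proof is \cref{thm:straightenTinv}. It provides a semistandard $T$-invariant tableau $\mathtt{s}(\Gamma_1\Gamma_2) \in \mathtt{ST}(\lambda_{d_1+d_2})$ with
\[
f_{\Gamma_1} f_{\Gamma_2} = f_{\mathtt{s}(\Gamma_1\Gamma_2)} \quad \text{on } X^{v_{\underline{l}}}_w, \qquad \mathtt{z}(\mathtt{s}(\Gamma_1\Gamma_2)) = \mathtt{z}(\Gamma_1) + \mathtt{z}(\Gamma_2).
\]
The point is that, because the relevant Pl\"ucker relations are binomial on $X^{v_{\underline{l}}}_w$ (\cref{sec:pluckrel}), the product straightens to a single standard monomial rather than a linear combination, so the above is an identity in $R_{d_1+d_2}$. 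Applying $\Phi = \Phi_{d_1+d_2} \circ \bar{\mathtt{z}}_{d_1+d_2}$ to the left-hand side gives $\bigotimes_{i=1}^{r-1} \prod_{j \in C_{i,2}} x_j^{\mathtt{z}_j(\Gamma_1) + \mathtt{z}_j(\Gamma_2)}$. On the other hand, the product in $A$ is componentwise, so $\Phi(f_{\Gamma_1})\,\Phi(f_{\Gamma_2})$ multiplies the monomials in each tensor factor, adding exponents, and yields exactly the same element. This establishes multiplicativity on basis elements, hence in general.

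The main obstacle is thus entirely contained in \cref{thm:straightenTinv}: once one knows that the product of two $T$-invariant standard monomials straightens to a single $T$-invariant standard monomial with additive $\mathtt{z}$-vector, the computation above is immediate. To finish, I would record that $\Phi$ is unital — both $R_0$ and $A_0$ are one-dimensional, spanned by the constant section and the empty monomial respectively, and $\Phi$ identifies them — and then conclude that $\Phi$, being a bijective graded $\mathbb{C}$-linear map that preserves products and the unit, is an isomorphism of graded $\mathbb{C}$-algebras. This identifies $R$ with $A = \bigotimes_{i=1}^{r-1} A^{(i)}$, the homogeneous coordinate ring of ${\mathbb P}^{a_1-l_1} \times \cdots \times {\mathbb P}^{a_{r-1}-l_{r-1}}$ in the stated polarization, which is precisely what \cref{thm:main} needs.
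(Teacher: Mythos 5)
Your proof is correct and follows essentially the same route as the paper: both reduce multiplicativity to the basis elements $f_{\Gamma_1}, f_{\Gamma_2}$ and invoke \cref{thm:straightenTinv} to write $f_{\Gamma_1}f_{\Gamma_2} = f_{\mathtt{s}(\Gamma_1\Gamma_2)}$ with $\mathtt{z}(\mathtt{s}(\Gamma_1\Gamma_2)) = \mathtt{z}(\Gamma_1)+\mathtt{z}(\Gamma_2)$, after which comparing exponents in $A$ gives $\Phi(f_{\Gamma_1}f_{\Gamma_2}) = \Phi(f_{\Gamma_1})\Phi(f_{\Gamma_2})$. Your explicit mention of the basis-reduction step and of unitality are minor points the paper leaves implicit.
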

	\begin{proof}
		It is enough to prove that $\Phi$ is a homomorphism of rings.
		
		Let \(\Gamma_1,\Gamma_2 \in \mathtt{ST}\). Using \cref{thm:straightenTinv} we have \(f_{\Gamma_1}f_{\Gamma_2} = f_{\mathtt{s}(\Gamma_1\Gamma_2)}\) such that $\mathtt{s}(\Gamma_1\Gamma_2)$ is semistandard. Now we prove \(\Phi\) is a ring homomorphism.
		\begin{align*}
			\Phi(f_{\Gamma_1}f_{\Gamma_2}) &= \Phi(f_{\mathtt{s}(\Gamma_1\Gamma_2)}) \\
			&= \bigotimes_{i=1}^{r-1} \prod_{j \in C_{i,2}} x_j^{\mathtt{z}_j({\mathtt{s}(\Gamma_1\Gamma_2)})} \\
			&= \bigotimes_{i=1}^{r-1} \prod_{j \in C_{i,2}} x_j^{\mathtt{z}_j(\Gamma_1) + \mathtt{z}_j(\Gamma_2)} && \text{(by \cref{thm:straightenTinv})}\\
			&= \Big(\bigotimes_{i=1}^{r-1} \prod_{j \in C_{i,2}} x_j^{\mathtt{z}_j(\Gamma_1)}\Big) 
			\Big(\bigotimes_{i=1}^{r-1} \prod_{j \in C_{i,2}} x_j^{\mathtt{z}_j(\Gamma_2)}\Big)\\
			&= \Phi(f_{\Gamma_1})\Phi(f_{\Gamma_2}) 
		\end{align*}
		Thus $\Phi: R \longrightarrow A$ is an isomorphism of graded $\mathbb{C}$-algebras.
	\end{proof}
\noindent	We now give the proof of main theorem.
	\begin{proof}[Proof of \cref{thm:main}]
	The algebra \(A^{(i)}\) is the homogeneous coordinate ring of \(c_i\)-th Veronese embedding of \(\mathbb{P}^{|C_{i,2}|-1}\) and the algebra \(A\) is the homogeneous coordinate ring of Segre embedding of the product of these spaces \cite[Exercise 13.14, p.299]{Eisenbud1995}. Let \(Y = \mathbb{P}^{|C_{1,2}|-1} \times \cdots \times \mathbb{P}^{|C_{r-1,2}|-1}\) and \({\cal L} = \mathcal{O}(c_1) \boxtimes \cdots \boxtimes \mathcal{O}(c_{r-1})\). Then we have \(\oplus_{d\ge 0} H^0(Y,{\cal L}^{\otimes d}) = A \simeq \oplus_{d\ge 0} R_d\).
\end{proof}

\appendix	
\section{Proof of \cref{lemma:Tinv-eq-condition}}\label{proof:Tinv_eq_condition}	

\begin{proof}
	\((\implies)\) For (1), $B_i$ is skew shape follows from definition of $B_i$ and subtableau of \(\Gamma\) containing boxes \(B_{i}\) is row semistandard follows from $\Gamma$ is semistandard. (2) follows from \cref{cor:BicontainsCi} and (3) follows from $\Gamma$ is non-zero on $X^{v_{\underline{l}}}_w$. 
	
	\((\impliedby)\) From (2) we have \(\Gamma\) is \(T\)-invariant. Now, it is sufficient to prove that $\Gamma$ is a non-zero semistandard tableau on \(X^{v_{\underline{l}}}_w\). We do this in the following four steps.
	
	\begin{enumerate}[label=\text{Step} \arabic*:,ref=\text{Step} \arabic*, wide]
		\item\label{bij:rowstd}\ \emph{\( \Gamma\) is row semistandard.} First row is semistandard follows from filling of \(B_1\) is row semistandard (given (1)). Let \(2 \le i \le r\). Consider two boxes \((i,j),(i,j')\) in row \(i\) such that \(j < j'\) and we prove that \(\Gamma(i,j)\le\Gamma(i,j')\). Since the boxes in row $i$ are \(B_{i-1,2}\sqcup B_{i,1}\)  we have following three cases
		\begin{multicols}{2}
			\begin{enumerate}[label=(\alph*),ref=(\alph*),itemsep=0pt, topsep=0pt]
				\item \( (i,j),(i,j')\in B_{i-1,2}\), 
				\item \( (i,j),(i,j')\in B_{i,1}\),
				\item \( (i,j)\in B_{i-1,2}\) and \((i,j')\in B_{i,1}\). 
			\end{enumerate}
		\end{multicols}
		For (a) and (b), \(\Gamma(i,j) \le \Gamma(i,j')\) follows from \(B_{i-1}\) and \(B_i\) are row semistandard (given (1)). For (c), using given (2) we have \(B_{i-1}\) and \(B_i\) contain entries from \(C_{i-1} = \{a_{i-2}+1, \ldots, a_{i-1}\}\) and \(C_i = \{a_{i-1}+1, \ldots, a_i\}\) respectively, hence we have \(\Gamma(i,j) \le \Gamma(i,j')\).
		
		\item \( \Gamma(i,nd) = a_i \). Observe that the box \((i,nd) \in B_{i,1}\). Hence from given (2) we have \(\Gamma(i,nd) \in C_i\). Since \(a_i\) appears with multiplicity \(rd\) in \(B_i\) (given (2)) and \(|B_{i,2}| = dc_i <dr\) (\cref{lemma:property_c}), \(a_i\) appears with non-zero multiplicity in \(B_{i,1}\). Further since \(B_{i}\) is row semistandard (given(1)) and \( a_i \) is largest integer in \(C_i\) we have \( \Gamma(i,nd) = a_i \).
		
		\item\label{step:colrd}\(\Gamma(i,rd) = a_{i-1}+1\).
		For \(i=1\) the box \((i,rd) \in B_{i,1}\). For \(2 \le i \le r\), the box \((i,rd) \in B_{i-1,2} \sqcup B_{i,1}\). Since \(c_{i-1} < r\) (\cref{lemma:property_c}), we have box \((i,rd) \in B_{i,1}\). 
		
		By given (2) we have $a_{i-1}+1 (\in C_{i})$ is filled in boxes $B_i (= B_{i,1}\sqcup B_{i,2})$ with multiplicity \(rd\). Using given (3) we have $\Gamma(i+1,1) \geq l_i \geq a_{i-1}+2$ and using (1) we have $B_{i,2}$ is row semistandard, it follows that $a_{i-1}+1$ can not appear in $B_{i,2}$. Thus $a_{i-1}+1$ appears in $B_{i,1}$ with multiplicity $rd$. Since \(B_{i,1}\) is row semistandard (given (1)) and \(a_{i-1}+1\) is smallest integer in \(C_{i}\) we have \(\Gamma(i,j) = a_{i-1}+1\) for all $dc_{i-1}+1 \leq j \leq dc_{i-1}+rd$. Since by \cref{lemma:property_c} $dc_i < dr$, we have $\Gamma(i,rd)=a_{i-1}+1$.
		
		\item \emph{\( \Gamma\) is column standard.} We have to prove \( \Gamma(i,j)<\Gamma(i+1,j) \) for \( 1 \le i \le r-1\) and \(1 \le j \le nd \). If \( j \le rd \) then by Step $1$, Step $3$ and given (3), we have \( \Gamma(i,j) \in [l_{i-1},a_{i-1}+1]\). Hence \( l_{i}\ge a_{i-1}+2 \) gives \( \Gamma(i,j) < \Gamma(i+1,j) \).
		If \( j > rd \) then by Step $2$ and Step $3$ we have \( \Gamma(i,j) \in [a_{i-1}+1,a_i] \). Hence \( \Gamma(i,j) < \Gamma(i+1,j) \).				
	\end{enumerate}	
	
	Therefore $\Gamma$ is semistandard follows from Step $1$ and Step $4$ and $\Gamma$ is non-zero on $X^{v_{\underline{l}}}_w$ follows from Step $2$ and given \(\Gamma(i,1) \ge l_{i-1}\).	
\end{proof}
	\bibliography{Tquot-ric-var}
	\bibliographystyle{alpha}
\end{document}